\newtheorem{thm}{Theorem}
\newtheorem{rk}{Remark}
\newtheorem{prop}{Proposition}
\newtheorem{clly}{Corollary}
\newtheorem{lemma}{Lemma}
\newtheorem{defi}{Definition}
\newcommand{\pf}{{\flushleft{\bf Proof: }}}
\newcommand{\R}{{\mathbb{R}}}
\newcommand{\C}{{\mathbb{C}}}
\newcommand{\Z}{{\mathbb{Z}}}
\newcommand{\N}{{\mathbb{N}}}
\newcommand{\T}{{\mathbb{T}}}
\newcommand{\D}{{\mathbb{D}}}
\DeclareMathOperator{\id}{Id}
\begin{document}
\title{Dynamics of covering maps of the annulus I: Semiconjugacies.}
\author{J.Iglesias, A.Portela, A.Rovella and J.Xavier}

\address{J. Iglesias, Universidad de La Rep\'ublica. Facultad de Ingenieria. IMERL. Julio
Herrera y Reissig 565. C.P. 11300. Montevideo, Uruguay}
\email{jorgei@fing.edu.uy }

\address{A. Portela, Universidad de La Rep\'ublica. Facultad de Ingenieria. IMERL. Julio
Herrera y Reissig 565. C.P. 11300. Montevideo, Uruguay }
\email{aldo@fing.edu.uy }

\address{A. Rovella, Universidad de La Rep\'ublica. Facultad de Ciencias. CMAT.
Igu\'a 4225. C.P. 11400. Montevideo, Uruguay}
\email{leva@cmat.edu.uy}
\
\address{J. Xavier, Universidad de La Rep\'ublica. Facultad de Ingenieria. IMERL. Julio
Herrera y Reissig 565. C.P. 11300. Montevideo, Uruguay }
\email{jxavier@fing.edu.uy }
\begin{abstract}
It is often the case that a covering map of the open annulus is semiconjugate to a map of the circle of the same degree.
We investigate this possibility and its consequences on the dynamics. In particular, we address the problem of the classification up to conjugacy.
However, there are examples which are not semiconjugate to a map of the circle, and this opens new questions.
\end{abstract}
\maketitle

\section{Introduction.}
Let $A$ be the open annulus $(0,1)\times S^1$.
If $f:A\to A$ is a continuous function, then the homomorphism $f_{*}$ induced by $f$ on the first homology group $H_1(A,\Z)\equiv \Z$, is $n\to dn$, for some integer $d$. This number $d$ is called the degree of $f$.  If $f$ is a covering map, any point $x\in A$ has an open neighborhood $U$ such that $f^{-1} (U)$ is a disjoint union of $|d|$ open sets, each of which is mapped homeomorphically onto $U$ by $f$.

In this article we consider the dynamics of covering maps $f:A \to A$ of degree $d$, with $|d|>1$. Our interest is focused on the existence of a semiconjugacy with $m_d(z)=z^d$ acting on $S^1$. In general, looking for semiconjugacies with maps with known features is useful to classify, to find periodic orbits, to calculate entropy.

\begin{defi}
\label{sc}
A continuous map $f:T\to T$, is semiconjugate to $g:T'\to T'$ (where $T$ and $T'$ stand for the annulus or the circle) if there exists a continuous map $h: T\to T'$ such that $hf=gh$ and $h_{*}$ is an isomorphism.
\end{defi}

The case of coverings where $d=\pm 1$ (i.e annulus homeomorphisms) has been extensively studied. In particular, much work has been devoted to use the rotations as models for the dynamics of other homeomorphisms. That is, is it possible to decide if a homeomorphism of the annulus that is isotopic to the identity is conjugate or semi-conjugate to a rotation?

The case of the circle is classical: An orientation-preserving homeomorphism of the circle is semi - conjugate to an irrational rotation if and only if its rotation number is irrational,
and if and only if it has no periodic points \cite{poinc}. Poincar\'e's construction of the rotation number can be generalized to a {\it rotation set} for homeomorphisms of the (open or closed) annulus that are isotopic to the
identity (see, for example \cite{becrlerp}).  If one focuses on
pseudo-rotations, the class of annulus homeomorphisms whose rotation set is reduced to a single number $\alpha$, it is natural to ask the following question: how much does the dynamics of a
pseudo-rotation of irrational angle
$\alpha$ look like the dynamics of the rigid rotation of angle $\alpha$? (see \cite{becrlerp}, \cite{becrler} for results on this subject). Even in the compact case (i.e irrational pseudo-rotations of the closed annulus) it is known
that the dynamics is not conjugate to that of the rigid rotation.  Furthermore, examples that are not even semi-conjugate to a rotation in the circle can be constructed using the method
of Anosov and Katok \cite{ak} (see  \cite{fathe}, \cite{fayk}, \cite{fays}, \cite{ha}, \cite{he} for examples and further developments about this method).

In contrast, any continuous map of the closed annulus of degree $d$ ($|d|>1$) is semi conjugate to $z^d$ on $S^1$ (see Corollary \ref{cerrado} in Section \ref{shd}). In Section 2 we generalize the concept of rotation number to endomorphisms of the circle and the annulus and show that it is a continuous map onto $S ^1$ that semi-conjugates with $z^d$. However, this is no longer true without compactness. We construct examples of degree $d$ covering maps of the open annulus that are not semiconjugate to $z^d$ on the circle.

The results obtained in \cite{iprx} give further motivation for our study, as covering maps of the annulus arise naturally when studying surface attractors. A connected set $\Lambda$ is an {\em attracting set} of an endomorphism $f$ on a manifold $M$ if there exists a neighborhood $U$ of $\Lambda$ such that the closure of $f(U)$ is a subset of $U$ and $\Lambda = \cap_{n\geq 0} f^n (U)$. The attracting set is called {\em normal} if $f$ is a local homeomorphism of $U$ and the restriction of $f$ to $\Lambda$ is a covering of $\Lambda$.

Let $M$ be a compact surface, $f$ an endomorphism of $M$ and $\Lambda$ a normal attractor of $f$. Assume that $f$ is $d:1$ in $\Lambda$. Then, it is shown in \cite{iprx}  that the immediate basin $B_0(\Lambda)$ of $\Lambda$ is an annulus
and the restriction of $f$ to it is a $d:1$ covering map. Moreover, if $A$ is an invariant component of $B_0(\Lambda)\setminus\Lambda$, then $A$ is also an annulus and $f$ is a $d:1$ covering of $A$.
Finally, if $\Lambda$ is a hyperbolic (normal) attractor, then $\Lambda$ is homeomorphic to a circle and the restriction of $f$ to $\Lambda$ is conjugate to $z\to z^d$ in $S^1$.

Several questions arise naturally:\\

\begin{enumerate}
\item
Is the restriction of a map $f$ to the immediate basin of a normal attracting set semiconjugate to $p_d(z)=z^d$ in
$\C\setminus\{0\}$?

\item
Is the restriction of $f$ to the immediate basin of $\Lambda$ semiconjugate to $z\to z^d$ in $S^1$?

\item
Is the restriction of $f$ to $A$ semiconjugate to $m_d(z)=z^d$ in $S^1$?

\end{enumerate}

We address all of these questions in this paper. The answer to (1) is no, while the answer to (2) and (3) is yes; the proof of these statements is contained in Section \ref{basins}. The more general question whether or not any covering map of the open annulus is semiconjugate to $m_d$ has a negative answer as was already pointed out, and a counterexample is given in Section \ref{contra}.

In section 2 we find sufficient conditions for the existence of a semiconjugacy with $m_d(z)=z^d$ in $S^1$. We also give a method to construct semiconjugacies, based on the rotation number, that applies always to covering maps of the circle. This leads to a classification up to conjugacy for coverings of the circle.
At first, we tried to extend, at least in part, this classification to coverings of the annulus. The results obtained throughout this work show however, that a generalization is very difficult, even for maps having empty nonwandering set.

Within the facts proved here, those which may be more interesting, in our opinion, are:\\
The proof in section 2 of the existence of a rotation number for maps of the circle, and that this rotation induces the semiconjugacy with
$m_d$ (Proposition \ref{p1}). This was applied to give a classification of maps of the circle (Theorem \ref{class1dim}).\\
There exists a semiconjugacy between $f$ and $m_d$ if and only if $f$ has an invariant trivial connector (see Definition \ref{connector}, Corollary \ref{rep2} and Corollary \ref{connec}).\\
There exists a semiconjugacy if and only if there exists a free connector (Corollary \ref{rep1} and Corollary \ref{connec}).\\
Not every covering of the annulus is semiconjugate to $m_d$ (Section 4).\\
The restriction to a normal attractor or to a basin of a normal attractor is semiconjugate to $m_d$ (Section 4).\\
The map $p_d(z)=z^d$ defined in the punctured unit disc is not $C^0$ stable (Section 4). \\

There are few references to coverings of the annulus in the literature. The better known family of examples is given by:
$$
(x,z)\in\R\times S^1\mapsto (\lambda x+\tau(z), z^d),
$$
where $\lambda$ is a positive constant less than $1$, and $\tau:S^1\to\R$ is a continuous function. The map with $\tau=0$ was introduced by Przytycki (\cite{prz}) to give an example of a map that is Axiom A but not $\Omega$-stable. Then Tsujii (\cite{tsu}) gave some examples having invariant measures that are absolutely continuous respect to Lebesgue. In \cite{bkru} the topological aspects of the attractor are studied and examples of hyperbolic attractors with nonempty interior are given. Note that in these examples the attracting sets cannot be normal, on account of the results of \cite{iprx} cited above.

In a subsequent work about coverings of the annulus we will investigate some questions relative to the existence of periodic cycles.

\section{Sufficient conditions for the existence of semiconjugacies.}

Throughout all this section, $d$ will be an integer with $|d|>1$ and $m_d$ will be the self-covering of the circle $m_d(z)=z^d$.
In this section we will show how to construct semiconjugacies of covering maps.
The first method, an extension of the {\em rotation number} is first defined for circle maps of degree of absolute value greater than one.
In the case of circle maps, the rotation number always exist, even if the map is not a covering. This immediately induces a semiconjugacy to
$m_d(z)$. We will give some consequences to the classification of covering maps of the circle in the first subsection. In the second subsection, we will give some sufficient conditions for the existence of the rotation number for maps of the annulus and introduce the
{\em shadowing argument}. In the final part of this section we introduce the {\em repeller argument}, a condition for the existence of semiconjugacies, that in subsequent sections is also seen to be necessary.

\subsection{The rotation number.}
Let $f$ be a continuous self map of $S^1$ of degree $d$, with $|d|>1$.
The circle $S^1$ is considered here as $\{z\in \C: |z|=1\}$ and its universal covering projection as the map $\pi_0: \R\to S^1$, $\pi_0 (x)= \exp{2\pi i x}\in S^1$.

We begin by proving the following lemma, which is esentially the Shadowing Lemma for expanding maps.

\begin{lemma}
\label{semi} Let  $F:\R\to \R$ be a lift of $f$. There exists continuous map $H_F^+:\R\to {\mathbb{R}}$ and
$H_F^-:\R\to {\mathbb{R}}$ satisfying the following properties:
\begin{enumerate}
\item
$H_F^+(x+1)=H_F^+(x)+1$, $H_F^-(x+1)=H_F^-(x)-1$
\item
$H_F^\pm F=dH_F^\pm$,
\item
$|H_F^+(x)-x|$ is bounded and $|H_F^-(x)+x|$ is bounded,
\item
$H_F^\pm(x)=\pm\ \lim_{n\to \infty} \frac{F^{n}(x)}{d^{n}}$.
\end{enumerate}
\end{lemma}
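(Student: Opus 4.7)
The approach is to define $H_F^+$ directly by the limit formula in property~(4) and then take $H_F^-:=-H_F^+$. The key observation is that because $f$ has degree $d$ the lift satisfies $F(x+1)=F(x)+d$, hence $F(x+k)=F(x)+kd$ for every $k\in\Z$; in particular the defect $\phi(x):=F(x)-dx$ is $1$-periodic and therefore bounded, say $|\phi|\le M$. A short induction on $n$ using the displayed identity gives $F^n(x+1)=F^n(x)+d^n$, so the defect of the $n$-th iterate, $\phi_n(x):=F^n(x)-d^n x$, is again $1$-periodic.

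I would next show that $G_n(x):=F^n(x)/d^n$ converges uniformly on $\R$. A direct computation gives
\[
G_{n+1}(x)-G_n(x)=\frac{F(F^n(x))-d\,F^n(x)}{d^{n+1}}=\frac{\phi(F^n(x))}{d^{n+1}},
\]
so $|G_{n+1}(x)-G_n(x)|\le M/|d|^{n+1}$. Since $|d|>1$ this is a summable geometric tail, so $(G_n)$ is uniformly Cauchy and its limit $H_F^+$ is a well-defined continuous function. By construction this yields the $+$ sign in property~(4), and telescoping the above estimate gives
\[
|H_F^+(x)-x|=|H_F^+(x)-G_0(x)|\le \sum_{n=0}^{\infty}\frac{M}{|d|^{n+1}}=\frac{M}{|d|-1},
\]
which is property~(3) for $H_F^+$.

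For property~(1), the identity $F^n(x+1)=F^n(x)+d^n$ gives $G_n(x+1)=G_n(x)+1$ for every $n$, and this passes to the limit. For property~(2),
\[
H_F^+(F(x))=\lim_{n\to\infty}\frac{F^{n+1}(x)}{d^n}=d\lim_{n\to\infty}\frac{F^{n+1}(x)}{d^{n+1}}=d\,H_F^+(x).
\]
Setting $H_F^-(x):=-H_F^+(x)$, all four properties follow at once: (1) and (3) simply change sign, (2) survives because scalar multiplication by $-1$ commutes with multiplication by $d$, and (4) is the definition.

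I do not expect any serious obstacle; this is the classical telescoping shadowing argument for an expanding circle endomorphism. The one small point worth keeping in mind is that nothing in the argument depends on the sign of $d$ — only on $|d|>1$, which is exactly what makes the geometric series converge — so the same construction covers orientation-reversing lifts. The existence of both $H_F^+$ and $H_F^-$, rather than of a single semiconjugacy, then reflects the two possible orientations for the induced degree-$\pm1$ map of~$S^1$.
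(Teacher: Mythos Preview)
Your proof is correct. The paper takes a slightly different route: it sets up the complete metric space $\mathcal{H}^\pm=\{H:\R\to\R \text{ continuous},\ H(x+1)=H(x)\pm1\}$ with the sup metric, observes that $T(H)=\tfrac{1}{d}H\circ F$ is a contraction on it, and obtains $H_F^\pm$ as the unique fixed point; properties (1), (2) and continuity then come for free, (3) follows from the $\Z$-periodicity of $H_F^+(x)-x$, and (4) is deduced last from (2) and (3). Your argument is the same machine unwound: iterating $T$ from the identity produces exactly your sequence $G_n=F^n/d^n$, and your telescoping bound $|G_{n+1}-G_n|\le M/|d|^{n+1}$ is precisely the contraction estimate. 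The fixed-point packaging buys uniqueness immediately (which the paper uses later in Proposition~\ref{p1}), while your direct-limit version is more elementary and makes the explicit bound in (3) visible without an extra step.
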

\begin{proof}
Consider the spaces $\mathcal H^+$ and $\mathcal H^-$ defined by
$$
\mathcal H^\pm=\{H:\R\to \R: \ H \mbox{ is continuous and }H(x+1)=H(x) \pm 1, \ \forall x\in \R \}
$$
and endowed with the following metric:
$d(H_1,H_2)=sup_{x\in\R}\{|H_1(x)-H_2(x)|\}$.

We do the proof for $H_F^+$, the other being similar.
Note that $\mathcal H^+$ is a complete metric space. Let
$T:=T_F:\mathcal{H}^+\to\mathcal{H}^+$ be defined by
$$T(H)(x)=\frac{H(F(x))}{d}.$$  Note that $F(x+1)= F(x) + d$ as the degree of $f$ is $d$.  So, $$\frac{HF(x+1)}{d} =\frac{ H (F(x)+d)}{d}= \frac{ H (F(x))+d}{d}=\frac{HF(x)}{d} +1,$$ and therefore $T$ is a well defined operator from the space $\mathcal{H}^+$ to itself.
So, \\$d(T(H_1),T(H_2))\leq \frac{1}{|d|}d(H_1,H_2)$, implying that
$T$ is a contracting operator. Then, there exists
$H^+_F\in\mathcal{H^+}$ such that $T(H^+_F)=H^+_F$. Equivalently, $H^+_FF=dH^+_F$. This proves
(1), (2) and continuity of $H^+_F$. To see (3), note that the function $x\to H^+_F(x) - x$
defined on $\R$ is continuous and ${\mathbb{Z}}$-periodic, thus
bounded:
\begin{equation*}
H^+_F(x+k) - (x+k) = H^+ _F(x) + k -x -k = H^+_F(x) - x.
\end{equation*}

To prove (4), use the previous item to obtain that there exists a constant $C$ such that
$|H^+_F(F^{n}(x))-F^{n}(x)|<C$ for all $n\in \N$. Dividing by $d^{n}$ it comes that
$|\frac{H^+_F(F^{n}(x))}{d^{n}}-\frac{F^{n}(x)}{d^{n}}|<\frac{C}{d^{n}}$.
As $H^+_FF^{n}=d^{n}H^+_F$ , then

$|H^+_F(x)-\frac{F^{n}(x)}{d^{n}}|<\frac{C}{|d|^{n}}$ for all
$n\in\N$ and so $H^+_F(x)=\lim_{n\to\infty} \frac{F^{n}(x)}{d^{n}}$.
\end{proof}

Define the {\em rotation number} of the point $x$ and the lift $F$, denoted $\rho_F(x)$, as the limit in item (4). So this lemma proves that the rotation number exists for every continuous map $f$ of the circle with degree $|d|>1$ and is a continuous function of the point $x$. It is worth noting that the properties (1), (2) and (3) of the above lemma imply that
$\rho_F(x)+1=\rho_F(x+1)$ and $\rho_{F+J}=\rho_F+J/(d-1)$.

The condition $H(x+1)=H(x)+1$ implies that $H_F^+$ induces a continuous $h_F^+$ with $(h_F^+)_*$ an isomorphism.
Then $h^+_F$ is a semiconjugacy from $f$ to $m_d$. The same for $H(x+1)=H(x)-1$: this provides a semiconjugacy $h_F^-$.
Next result proves that these are the unique possible semiconjugacies.

First two definitions:
A map $h$ in the circle is monotone if it has a monotone lift. A self-conjugacy of $m_d$ is a homeomorphism of the circle $c$ such that $cm_d=m_dc$.

\begin{prop}
\label{p1}
Let $f$ be a degree $d$ map of the circle, where $|d|>1$.
\begin{enumerate}
\item
Every semiconjugacy $h$ from $f$ to $m_d$ is the quotient map of a fixed point of the operator $T_F$, for some lift $F$ of $f$ ($T_F$ being defined either on $\mathcal H^+$ or on $\mathcal H^-$).
\item
If $f$ is assumed to be a covering map, then every semiconjugacy is monotone.
\item
If $h_1$ and $h_2$ are semiconjugacies from $f$ to $m_d$, then there exists a unique $c$ such that $h_1=ch_2$, where $c$ is a self-conjugacy of $m_d$.
\end{enumerate}
\end{prop}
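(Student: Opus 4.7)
The plan is to dispatch the three parts in order, leaning on the Banach fixed-point uniqueness from Lemma \ref{semi} together with the freedom to modify the chosen lifts of $f$ and $h$ by integer constants. For (1), I start with a lift $H:\R\to\R$ of the given semiconjugacy $h$, satisfying $H(x+1)=H(x)+\varepsilon$ with $\varepsilon=\pm 1$, and pick any lift $F_0$ of $f$. The identity $hf=m_d h$ lifts to $HF_0=dH+k_0$ for some integer $k_0$, since the difference of the two lifts is continuous and $\Z$-valued, hence constant. The key move is to replace $F_0$ by $F:=F_0-\varepsilon k_0$; using the quasi-periodicity of $H$, one finds $HF(x)=H(F_0(x))+\varepsilon(-\varepsilon k_0)=dH(x)$, so $H$ is a fixed point of $T_F$, and its quotient is $h$.

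For (2), assume $f$ is a covering, so that every lift $F$ is a local homeomorphism $\R\to\R$, hence strictly monotone (a continuous local homeomorphism of an interval is strictly monotone). By (1), any semiconjugacy $h$ has a lift which is a fixed point of $T_F$ for some lift $F$ of $f$; Banach uniqueness in $\mathcal{H}^\varepsilon$ identifies it with $H_F^\varepsilon$, and Lemma \ref{semi}(4) presents this lift as $\varepsilon\lim_n F^n/d^n$. Each $F^n/d^n$ is strictly increasing (when $d<0$ the signs of $F^n$ and $d^n$ cancel at each $n$), so the pointwise limit $H$ is monotone, meaning $h$ is monotone by definition.

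For (3), use (1) to pick lifts $H_i\in\mathcal{H}^{\varepsilon_i}$ of $h_i$ which are fixed points of $T_{F_i}$ for suitable lifts $F_i$ of $f$. Writing $F_2=F_1+m$ with $m\in\Z$, a direct computation shows that $\tilde H_2:=H_2-\varepsilon_2 m/(d-1)$ lies in $\mathcal{H}^{\varepsilon_2}$ and is a fixed point of $T_{F_1}$. If $\varepsilon_1=\varepsilon_2$, uniqueness of the fixed point on $\mathcal{H}^{\varepsilon_1}$ forces $H_1=\tilde H_2$, whence $h_1=R_\alpha\circ h_2$ for some $\alpha$ with $(d-1)\alpha\in\Z$, a rotation that commutes with $m_d$. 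If $\varepsilon_1\neq\varepsilon_2$, then $-\tilde H_2\in\mathcal{H}^{\varepsilon_1}$ is also a fixed point of $T_{F_1}$, so by uniqueness $H_1=-\tilde H_2$; this quotients to $h_1(z)=\omega\,h_2(z)^{-1}$ with $\omega^{d-1}=1$, and the map $c(w)=\omega w^{-1}$ commutes with $m_d$ (inversion commutes with $m_d$, as does the rotation by $\omega$). Uniqueness of $c$ follows because $(h_2)_*$ being an isomorphism forces $h_2$ to have degree $\pm 1$, hence to be surjective, so $c_1\circ h_2=c_2\circ h_2$ implies $c_1=c_2$.

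The main obstacle I expect is the orientation book-keeping in part (3): in particular, recognising that the mixed case $\varepsilon_1\neq\varepsilon_2$ cannot be handled by uniqueness in a single space $\mathcal{H}^\varepsilon$ without passing through the involution $H\mapsto -H$, and that the resulting self-conjugacy is a rotation composed with inversion rather than a pure rotation. Once this is noted, the rest of the argument reduces to a bookkeeping exercise anchored to Banach uniqueness and the explicit description of the self-conjugacies of $m_d$ as maps $z\mapsto\omega z^{\pm 1}$ with $\omega^{d-1}=1$.
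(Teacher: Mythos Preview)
Your proof is correct and follows essentially the same approach as the paper: reduce everything to the Banach fixed points $H_F^\pm$ of Lemma~\ref{semi}, track how these change under integer shifts of the lift $F$, and use $H\mapsto -H$ to pass between $\mathcal H^+$ and $\mathcal H^-$. You are in fact more careful than the paper in two places---in (1) you spell out the adjustment $F=F_0-\varepsilon k_0$ that makes $H$ an honest fixed point of $T_F$, and in (3) you give the (easy) surjectivity argument for uniqueness of $c$---but the underlying strategy is identical.
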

\begin{proof}
Given a lift $F$ of $f$ let $H$ be a lift of $h$ and note that $HF=F+k$ for some $k\in\Z$. Then $H$ is the fixed point of $T_{F+k}$ or $T_{F-k}$ according to the degree $\pm 1$ of $h$.

The first assertion follows because there exists a lift $H$ of a semiconjugacy $h$ that is fixed point of $T$; Moreover, as $h_*$ is an isomorphism, then $H(x+1)=H(x)\pm 1$, so $H$ belongs to one of the spaces $ \mathcal H^+$ or to $\mathcal H^-$.
To prove the second assertion, note that if $f$ is a covering, then $F$ is monotonic, and item (4) of Lemma \ref{semi} implies that the lift
$H$ of $h$ is a monotonic function; it comes that $h$ is monotonic in $S^1$.
To prove the last assertion, we first compare $h^+_F$ with $h^+_{F'}$ when $F$ and $F'$ are lifts of $f$. Note that if $F_0$ is a lift of $f$, then every other lift is $F_j=F_0+j$ where $j\in\Z$. If $HF_0=dH$ and $j\in\Z$, then a simple calculation shows that
$H_{F_0}^+=H_{F_j}^+-j/(d-1)$.
Thus $h_{F_0}^+=ch_{F_j}^+$, where $c(z)=\exp(2\pi i j/(d-1))z$, which clearly commutes with $m_d$.
To deal with the case of orientation reversing $h$, just note that $H^-_F=-H^+_F$, so $h^-_F=ch^+_F$, where $c$ is complex conjugation.
\end{proof}

At the beginning of the next subsection we will explain when this construction can be carried on for annulus maps. But first we will use the proposition above to obtain some consequences for one dimensional dynamics.

A proper arc in $S^1$ is a closed arc with nonempty interior and nonempty exterior. If $f$ is a covering map, and $h$ a semiconjugacy to $m_d$, then $h$ is monotonic, hence $h^{-1}(y)$ is either a point or a proper arc.

\begin{defi}
\label{lambdasubefe}
Let $f$ be a covering map of the circle and $h$ a semiconjugacy to $m_d$. Define $\Lambda_f$ as the set of points $x\in S^1$ such that
$h^{-1}(h(x))$ is a proper arc (equivalently, $h^{-1}(h(x))$ does not reduce to $\{x\}$). A component of $\Lambda_f$ is usually called a Plateau of $h_f$.
\end{defi}

Note that by item $(3)$ of Proposition \ref{p1}, this definition does not depend on the choice of $h$.

Some properties are:
\begin{enumerate}
\item
$f$ is conjugate to $m_d$ if and only if $\Lambda_f$ is empty.
\item
Each component of $\Lambda_f$ has nonempty interior, and this implies it has countable many components. It follows that if $I$ is a component of $\Lambda_f$, then $I$ is equal to $h^{-1}(h(x))$ for some point $x$. Moreover $f$ is injective in $I$, because $f(x)=f(y)$ with $x\neq y$ in $I$ implies $f(I)=S^1$ as $f$ is a covering. This contradicts the fact that $h|_{I}$ is constant and hence $h|_{f(I)}$ is constant.
\item
$\Lambda_f$ is completely invariant, meaning $f^{-1}(\Lambda_f)=\Lambda_f$. Indeed, the equation $hf(I)=m_dh(I)$ for an interval $I$ implies that $h$ is constant on $I$ if and only if $h$ is constant on $f(I)$. This implies that for $I$ a component of $\Lambda_f$ it holds that
$f^{n}(I)\cap f^{m}(I)\neq\emptyset$ implies $f^n(I)=f^m(I)$.
\item
Note that $h(\Lambda_f)$ is completely invariant under $m_d$, so it is dense in $S^1$, but is not the whole circle since it is countable.
\item
If $x\in I$, where $I$ is a (pre)-periodic interval of $f$, then $f^n(x)$ is asymptotic to a periodic orbit of $f$.
\end{enumerate}

A first application of the existence of the semiconjugacy describes all the transitive covering maps of the circle. A map is transitive if there exists $x\in S^1$ such that its forward orbit is dense in $S^1$.

\begin{clly} Let $f: S^1\to S^1$ be a transitive covering map of degree $|d|>1$. Then $f$ is conjugate to $m_d$.
\end{clly}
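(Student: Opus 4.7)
The plan is to show that $\Lambda_f=\emptyset$, which by property~(1) of the list after Definition~\ref{lambdasubefe} is equivalent to $f$ being conjugate to $m_d$. I will argue by contradiction: if $\Lambda_f\neq\emptyset$, I will exhibit $h(\Lambda_f)$ as a finite set, contradicting the density asserted in property~(4). Throughout, $h$ denotes the monotone semiconjugacy provided by Lemma~\ref{semi} and Proposition~\ref{p1}.

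The key preliminary step is to show that $f$ carries each component of $\Lambda_f$ \emph{onto} another component. A component has the form $J=h^{-1}(y)$ with $y\in h(\Lambda_f)$, and since $h(\Lambda_f)$ is completely $m_d$-invariant, the set $f^{-1}(J)=h^{-1}(m_d^{-1}(y))$ is a disjoint union of $|d|$ proper arcs, each itself a component of $\Lambda_f$. Because $J$ is a proper arc it is evenly covered, so each preimage component is mapped homeomorphically onto $J$ by $f$. Consequently the rule $\phi(I):=f(I)$ defines a map on the set of components, and under the bijection $I\leftrightarrow h(I)$ it is conjugate to $m_d$ acting on $h(\Lambda_f)$.

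Now I invoke transitivity. Fix $x\in S^1$ with dense forward orbit and let $n_0$ be the first time $f^n(x)\in\Lambda_f$; let $I_0$ be the component entered. For any other component $I'$, its interior is a nonempty open set (property~(2)), so the orbit hits it at some time $n_{I'}\ge n_0$. Then
\[
f^{n_{I'}}(x) = f^{\,n_{I'}-n_0}\!\bigl(f^{n_0}(x)\bigr) \in \phi^{\,n_{I'}-n_0}(I_0),
\]
while also $f^{n_{I'}}(x)\in I'$. Distinct components of $\Lambda_f$ are disjoint, so $I'=\phi^{\,n_{I'}-n_0}(I_0)$. Every component therefore lies in the forward $\phi$-orbit of $I_0$, and translating via $h$ gives $h(\Lambda_f)=\{m_d^{k}(y_0):k\ge 0\}$ with $y_0:=h(I_0)$.

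Finally, by property~(4) the set $h(\Lambda_f)$ is backward $m_d$-invariant, so each of the $|d|$ points of $m_d^{-1}(y_0)$ must equal $m_d^{k}(y_0)$ for some $k\ge 0$; applying $m_d$ yields $y_0=m_d^{k+1}(y_0)$, so $y_0$ is $m_d$-periodic. But then its forward orbit $h(\Lambda_f)$ is finite, contradicting density in $S^1$. The main obstacle I anticipate is the first step, namely verifying that $f$ sends components \emph{onto} components (so that $\phi$ is well-defined and the orbit of $I_0$ corresponds exactly to the $m_d$-orbit of $y_0$); once the covering structure plus complete invariance is used for this, the rest is a short orbit-tracking argument.
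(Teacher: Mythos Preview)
Your proof is correct, but it takes a considerably longer route than the paper's.

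The paper's argument is essentially two lines: pick any component $I$ of $\Lambda_f$; since the dense orbit visits $I$ at two distinct times $n<m$, one has $f^m(x)\in f^{m-n}(I)\cap I$, and property~(3) then forces $f^{m-n}(I)=I$. Thus $I$ is periodic, and the forward orbit of $f^n(x)$ is trapped in the finite union $\bigcup_{j=0}^{m-n-1}f^j(I)$ of proper arcs, contradicting density.

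Your approach instead establishes a global structural fact---that every component of $\Lambda_f$ lies in the forward $\phi$-orbit of a single $I_0$---and then uses backward invariance of $h(\Lambda_f)$ to deduce that $y_0$ is $m_d$-periodic, whence $h(\Lambda_f)$ is finite, contradicting its density. The verification that $f$ carries components \emph{onto} components (which you rightly flagged as the delicate point) is handled correctly via the covering structure and the identity $f^{-1}(h^{-1}(y))=h^{-1}(m_d^{-1}(y))$. What you gain is a complete description of $h(\Lambda_f)$ as a single forward $m_d$-orbit; what you lose is economy, since the paper never needs $\phi$, backward invariance, or the full orbit structure---the single observation ``two visits to $I$ force $I$ periodic'' already suffices.
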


\begin{proof} Recall from property (1) above that $f$ is conjugate to $m_d$ if and only if $\Lambda_ f= \emptyset$.  Suppose that $\Lambda_ f$ contains a non-trivial interval $I$.  Let $x\in S^1$ be such that its forward orbit is dense.  Then, there exists $n\geq 0$ such that $f^n (x)\in I$ and
$m>n$ such that $f^m(x) \in I$. It follows from item (3) of the properties of $\Lambda_f$ above that $I$ is a pre-periodic interval, contradicting that the forward orbit of $x$ is dense.
\end{proof}

The nonwandering set of every covering of $f$ is defined as the set of points $x$ such that for every neighborhood $U$ of $x$ there exists an integer $n>0$ such that $f^n(U)\cap U\neq\emptyset$. The nonwandering set of $f$ is denoted by $\Omega(f)$.

\begin{clly}
If $f$ is a covering of the circle having degree $d$ with $|d|>1$, then the set of periodic points of $f$ is dense in the nonwandering set.
\end{clly}
\begin{proof} If $x$ is a nonwandering point of $f$ that is not periodic, then item (5) above implies that $x$ does not belong to the interior of $\Lambda_f$. It follows that the image under $h_f$
of a neighborhood $U$ of $x$ contains a proper arc. As the $m_d$-periodic points are dense, it follows that the interior of $h(U)$ contains a periodic point, say $p$. Then the endpoints of $h_f^{-1}(p)$ must be  periodic points of $f$, and at least one of them is contained in $U$.
\end{proof}

The last application is a classification of coverings of the circle. The Classification Theorem, says, roughly speaking, that one can construct any degree $d$ map $f$ of the circle as follows: choose some orbits of $m_d$ and open an interval in each one of them. If the orbit was wandering, nothing to do, and if the orbit was periodic, then choose a homeomorphism of the interval and insert it as the restriction of $f$ to the first return to the interval. So the classification is in terms of the following data: given a map $f$, give the degree of $f$, the set $h_f(\Lambda_f)$, and an equivalence class of homeomorphisms of the interval for each periodic orbit of $f$ that belongs to $ \Lambda_f$. The construction is a little bit complicated since $m_d$ has nontrivial self-conjugacies.

First look at the self conjugacies of $m_d$, that is, the set of
homeomorphisms of the circle that commute with $m_d$.
\begin{clly}
\label{autoconj}
Any semiconjugacy from $m_d$ to $m_d$ is a homeomorphism. The group $G_d$ of self-conjugacies of $m_d$ is isomorphic to $D_{|d-1|}$, the dihedral group.
\end{clly}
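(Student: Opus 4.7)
The plan is to derive both assertions from Proposition~\ref{p1}(3) combined with a short rigidity computation on lifts.

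For the first assertion, I observe that the identity map is a semiconjugacy from $m_d$ to itself. Given any semiconjugacy $h$ from $m_d$ to $m_d$, I would apply item~(3) of Proposition~\ref{p1} with $h_1=h$ and $h_2=\id$ to produce a self-conjugacy $c$ of $m_d$ such that $h=c\circ\id=c$. Thus $h$ is itself a self-conjugacy and, in particular, a homeomorphism.

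For the identification of $G_d$, I would take $c\in G_d$ and pick a lift $C:\R\to\R$. Since $c$ is a homeomorphism, $C(x+1)=C(x)+\varepsilon$ with $\varepsilon\in\{+1,-1\}$, and the commutation $cm_d=m_dc$ lifts to $C(dx)=dC(x)+k$ for some $k\in\Z$. In the orientation-preserving case $\varepsilon=+1$, I would set $G(x)=C(x)-x+k/(d-1)$ and check that $G(x+1)=G(x)$ and $G(dx)=dG(x)$. Since $G$ is continuous and $1$-periodic it is bounded, and iterating gives $|G(x)|=|G(d^nx)|/|d|^n\le \|G\|_\infty/|d|^n\to 0$, forcing $G\equiv 0$. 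Hence $C(x)=x-k/(d-1)$ and $c$ is a rigid rotation; the residue of $k$ modulo $d-1$ yields exactly $|d-1|$ distinct rotations, forming a cyclic subgroup $R$ of $G_d$ of order $|d-1|$ (which agrees with the computation already carried out in the proof of Proposition~\ref{p1}(3)).

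For the orientation-reversing case $\varepsilon=-1$, I would first note that complex conjugation $\iota(z)=\bar z$ belongs to $G_d$, since $\overline{z^d}=\bar z^d$. For any orientation-reversing $c\in G_d$, the composition $\iota\circ c$ is orientation-preserving and still commutes with $m_d$, so by the previous step it lies in $R$. Therefore $G_d=R\cup\iota R$ has exactly $2|d-1|$ elements, is generated by a rotation of order $|d-1|$ together with a reflection satisfying the standard dihedral relation, and is isomorphic to $D_{d-1}$. The main obstacle is the rigidity step producing $G\equiv 0$; but the combination of boundedness from periodicity with the expansion $G(dx)=dG(x)$ and $|d|>1$ gives it immediately.
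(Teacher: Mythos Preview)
Your proof is correct and follows essentially the same line as the paper. The paper's one-line proof (``apply Proposition~\ref{p1} with $f=m_d$ and use the identity'') relies on the fact that the proof of Proposition~\ref{p1}(3) already computes the semiconjugacies $h_{F_j}^{\pm}$ explicitly as rotations by $j/(d-1)$ and complex conjugation; your rigidity step (bounded periodic $G$ with $G(dx)=dG(x)$ forces $G\equiv 0$) is exactly the uniqueness argument underlying the contracting operator $T$ in Lemma~\ref{semi}, so you have simply unpacked that computation rather than quoted it.
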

In other words, if $\{\alpha_j : 0\leq j\leq |d-1|-1\}$ denote the $|d-1|$-roots of unity, $c_j$ is the rotation of angle $\alpha_j$ and $\bar c(z)=\bar z$ is complex conjugacy,
then the set of self conjugacies of $m_d$ is the group generated by the $c_j$ and $\bar c$.
\begin{proof}
Just apply Proposition \ref{p1} for the equation $hf=m_dh$, where $f=m_d$, and use the identity as a self-conjugacy.
\end{proof}

Note that the set $h_f(\Lambda_f)$ may contain a periodic orbit; in this case $\Lambda_f$ contains a periodic interval. Assume $I$ is a component of $\Lambda_f$ such that
$f^n(I)=I$. Then the restriction of $f^n$ to $I$ is conjugate to a homeomorphism of the interval.
Let $\varphi_1:I_1\to I_1$ and $\varphi_2:I_2\to I_2$ be homeomorphisms, where $I_1$ and $I_2$ are closed intervals. Then $\varphi_1$ and $\varphi_2$ are equivalent if
they are topologically conjugate via an increasing homeomorphism. The quotient space is denoted by $\mathcal L$ and the class of a homeomorphism $\varphi$ is denoted by $[\varphi]^+$. Define also $[\varphi]^-:=[h\varphi h^{-1}]^+$, where $h$ is any decreasing homeomorphism.
Note that $\varphi_1\in[\varphi_2]^-$ iff $\varphi_1$ and $\varphi_2$ are cojugate via a decreasing homeomorphism.\\
{\bf Example.} If $f(x)=x^2$ defined on $[0,1]$, then $[f]^+\cap [f]^-=\emptyset$, but if $f(x)=x^3$ defined on $[-1,1]$, then $[f]^+=[f]^-$.\\

A proper arc $I\subset S^1$ is oriented with the same orientation of $S^1$; say positive orientation is counterclockwise orientation.
Then one may assign elements $[f^n|_{I}]^+$ and $[f^n|_{I}]^-$ in $\mathcal L$ to each $n$-periodic component $I$ of $\Lambda_f$. As $[f^n|_{I}]^{\pm}=[f^n|_{f^jI}]^{\pm}$ for every $j>0$, this correspondence depends on the
orbit of the interval and not on the choice of $I$.

Given applications $\tau_i:\Lambda_i\to\mathcal L$, where $\Lambda_i$ is a $m_d$ completely invariant nontrivial subset of $S^1$ for $i=1,2$,
say that $\tau_1$ is $\mathcal L^+$-equivalent (resp. $\mathcal L^-$) to $\tau_2$ if there exists an orientation preserving (resp. reverting) $c\in G_d$ such that $c\Lambda_1=\Lambda_2$ and $\tau_1=\tau_2 c|_{\Lambda_1}$. The $\mathcal L^\pm$-equivalence class of $\tau$ is denoted
by $[\tau]^\pm$.

Next we define the data $\mathcal D^+_f$ associated to $f$ as follows:
\begin{enumerate}
\item
An integer of absolute value greater than one, the degree $d_f$ of $f$.
\item
The class $[\tau_f]^+$, where $\tau_f:h_f(\Lambda_f)\to\mathcal L$ is defined as follows: first choose an orientation preserving semiconjugacy $h_f$ from $f$ to $m_d$, and for each $x\in h_f(\Lambda_f)\cap Per(m_d)$,
$\tau_f(x)=[f^n|_{h_f^{-1}(x)}]^+$, where $n$ is the period of $x$. When $x\in h_f(\Lambda_f)$ is not periodic, $\tau_f(x)$ is defined as the class of the identity. Note that by item (3) of Proposition \ref{p1} the class $\tau_f(x)$ does not depend on the choice of $h_f$.
\end{enumerate}

\begin{prop}
\label{positive}
If $\mathcal D^+_f=\mathcal D^+_g$, then $f$ and $g$ are conjugate.
\end{prop}
\begin{proof}
Take orientation preserving semiconjugacies $h_f$ and $h_g$ such that $h_ff=m_dh_f$ and
$h_gg=m_dh_g$, and note that by hypothesis there exists an orientation preserving homeomorphism $c\in G_d$ such that $c(h_f(\Lambda_f))=h_g(\Lambda_g)$ and $\tau_f=\tau_gc|_{h_f(\Lambda_f)}$.
The conjugacy $\psi$ between $f$ and $g$ will be obtained by solving the equation $ch_f=h_g\psi$, and of course, $\psi f=g\psi$.\\

If $x\notin \Lambda_f$, then $ch_f(x)\notin\Lambda_g$, so $h_g^{-1}ch_f(x)$ contains a unique point, define $\psi(x)$ as this unique point.
This defines $\psi$ on $S^1\setminus \Lambda_f$. Note first that if $x\notin\Lambda_f$ then $h_g\psi fx=ch_ff x=cm_dh_fx=m_dch_fx=m_dh_g\psi x=h_gg\psi x$, which implies $\psi fx=g\psi x$ because $\psi fx\notin \Lambda_g$.

Now assume that $I$ is a component of $\Lambda_f$. It is already known which interval $J$ is the image of $I$ under $\psi$, this must be the component $J$ of $\Lambda_g$ such that $h_g(J)=ch_f(I)$; of course $J$ is wandering for $g$ iff $I$ is wandering for $f$, and that $J$ is
(pre)-periodic for $g$ iff $I$ is (pre)-periodic for $f$, and the periods coincide. So choose any increasing $\psi$ from $I$ to $J$ if $I$ is wandering for $f$ and choose an increasing conjugacy from $f$ to $g$ if $I$ is periodic for $f$, and then extend it to the $f$- grand orbit of $I$ ($\cup _{n>0,\ m>0} f^{-n} (f^m(I))$) such that $g\psi=\psi f$ as follows. If $m>0$ and $x\in I$, note that $f^m$ is injective in $I$  and define $\psi(f^m(x))=g^m(\psi(x))$; if $n>0$ and $z\in f^{-n}(f^m(I))$, then note that $L=h_g^{-1}(ch_f(z))$ is a component of $\Lambda_g$ where $g^m$ is injective, so one can define $\psi(z)=g^{-m}|_L\psi f^m(z)$.
Repeat this proceeding for each grand orbit of a wandering interval. Note that the equation $h_g\psi=ch_f$ holds in all the grand orbit of $I$: $h_g\psi f^k(x))=h_gg^k \psi(x)=m_d^k(h_g\psi(x))=m_d^kch_f(x)=cm_d^kh_f(x)=ch_f(f^k(x))$. So both equations for $\psi$ still hold.\\
This defines $\psi$ in all of $S^1$. Note that it is bijective by construction and continuous because it preserves orientation.
\end{proof}

As we want a classification, it remains to consider the case where the conjugacy between $f$ and $g$ is necessarily reverting.

Define the data $\mathcal D^-_f$ associated to $f$ as follows:
\begin{enumerate}
\item
An integer of absolute value greater than one, the degree $d_f$ of $f$.
\item
The class $[\tau_f]^-$, where $\tau_f:h_f(\Lambda_f)\to\mathcal L$ is defined as follows: first choose an orientation preserving semiconjugacy $h_f$ from $f$ to $m_d$, and for each $x\in h_f(\Lambda_f)\cap Per(m_d)$,
$\tau_f(x)=[f^n|_{h_f^{-1}(x)}]^-$, where $n$ is the period of $x$. When $x\in h_f(\Lambda_f)$ is not periodic, $\tau_f(x)$ is defined as the class of the identity. Note that by item (3) of Proposition \ref{p1} the class $\tau_f(x)$ does not depend on the choice of $h_f$.
\end{enumerate}

\begin{thm}
\label{class1dim}
Two covering maps $f$ and $g$ are conjugate if and only if $\mathcal D_f^+=\mathcal D_g^+$ or $\mathcal D_f^-=\mathcal D_g^-$.
\end{thm}
\begin{proof}
Assume first that $f$ and $g$ are conjugate, and let $h$ be a homeomorphism of $S^1$ such that $hf=gh$.
It follows that each point $z$ in $S^1$ has the same number of preimages under $f$ or $g$. This implies that the degree of $f$ and $g$ are equal. Next consider orientation preserving
semiconjugacies $h_f$ and $h_g$ such that $h_ff=m_dh_f$ and $h_gg=m_dh_g$, given by Lemma \ref{semi}. Note that $h_gh$ is a semiconjugacy from
$f$ to $m_d$, so there exists $c\in G_d$ such that $h_f=ch_gh$. Then $I$ is a component of $\Lambda_f$ iff $h(I)$ is a component of $\Lambda_g$
and $h(\Lambda_f)=\Lambda_g$.\\
Note that $c$ preserves orientation iff $h$ preserves orientation.
Moreover, $h_f(\Lambda_f)=c h_g(\Lambda_g)$, so if $z\in h_g(\Lambda_g)$ and $J=h_g^{-1}(z)$, then $I:=h^{-1}(J)$ is contained in $\Lambda_f$. If $z$ is periodic for $m_d$, with period $n$, then the restriction of $g^n$ to $J$ is conjugate to the restriction of $f^n$ to $I$; moreover, as $c(z)=h_f(I)$, it follows that $\tau_f(c(z))=\tau_g(z)$. The same equation holds trivially when $z\in h_g(\Lambda_g)$ is not periodic.\\
To prove the theorem in the other direction, note that as in Proposition \ref{positive} it can be proved that $\mathcal D^-_f=\mathcal D^-_g$ implies that there exists an orientation reversing homeomorphism $\psi$ such that $\psi f=g\psi$.

\end{proof}

The space $\mathcal H$ is uncountable, so the set of conjugacy classes of coverings of the circle is also uncountable.
However, restricting to the class of maps having all its periodic points hyperbolic, we state the following question:
How many equivalence classes of coverings are there? This question related to another one:
If $f$ has a wandering interval $I$, then $h(I)$ is a non pre-periodic orbit of $m_d$. Which non pre-periodic orbit of $m_d$ is obtained as the image $h(I)$ for some $f$, $h$ and $I\subset\Lambda_f$?

Under stronger assumptions wandering intervals are forbidden, giving:

\begin{clly}
\label{cunidim1}
The number of equivalence classes of $C^2$ covering maps of the circle all of whose periodic points are hyperbolic and critical points are non-flat is countable.
\end{clly}
Note that a $C^2$ covering $f$ may have points $z$ where $f'(z)=f''(z)=0$.
\begin{proof}
The corollary follows by two strong theorems. On one hand, Ma\~n\'e proved that under these hypotheses the set of attracting periodic orbits is finite. See Chapter 4 of \cite{mms}, where it is also proved that there cannot be wandering intervals in these cases.
\end{proof}

\subsection{The shadowing argument.}\label{shd}

We have seen in the previous subsection some applications of the generalized rotation number for covering maps of the circle. The question now is to what extent this can be carried out in the annulus. Throughout this subsection, $f$ is just a degree $d$ ($|d|>1$) continuous self map of the annulus.

Let $A$ be the open annulus, $A=(0,1)\times S^1$ and $f$ a degree $d$ covering map of $A$.
We will use some standard notations: the universal covering projection is the map
$\pi:\tilde A=(0,1)\times \R\to A$ given by $\pi(x,y)=(x,\exp(2\pi i y))$.
We will denote by $F$ any lift of $f$ to the universal covering, that is, $F$ is a map
satisfying $f \pi=\pi F$. Note that $F(x,y+1)=F(x,y)+(0,d)$.
For a point $(x_0,y_0)\in \tilde A$, let $(x_n,y_n)=F^n(x_0,y_0)$, where $n\geq 0$.
The following statement can be proved exactly as Lemma \ref{semi}:
\begin{lemma}
\label{semi2} Let $f:A\to A$ be a continuous map of degree $d$, $|d|>1$, and let
$F:\tilde A\to \tilde A$ be a lift of $f$. Let $K$ be a compact $f$-invariant ($f(K)\subset K$) subset of the annulus, and
$\tilde K=\pi^{-1}(K)$. Then there exists a continuous map $H^+_F:\tilde K\to {\mathbb{R}}$ such that:

\begin{enumerate}
\item
$H_F^+(x+1,y)=H_F^+(x,y)+1$,
\item
$H^+_FF=dH^+_F$,
\item
$|H^+_F(x,y)-y|$ is bounded on $\tilde K$,
\item
$H^+_F(x)=\lim_{n\to \infty} \frac{y_n}{d^{n}}$.

\end{enumerate}
\end{lemma}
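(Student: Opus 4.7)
The plan is to mimic the proof of Lemma \ref{semi} almost verbatim, using the Banach fixed-point theorem on a suitable complete metric space of functions on $\tilde K$. First I would observe that since $f(K)\subset K$ and $\pi F=f\pi$, the preimage $\tilde K=\pi^{-1}(K)$ is $F$-invariant, and that although $\tilde K$ is noncompact, the deck transformation $(x,y)\mapsto (x,y+1)$ acts on $\tilde K$ with compact quotient $K$. This compactness of the quotient is what will allow the sup-metric argument to go through.

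Next I would define
$$
\mathcal{H}^+=\{H:\tilde K\to \R : H \text{ continuous},\ H(x,y+1)=H(x,y)+1,\ \sup_{\tilde K}|H(x,y)-y|<\infty\},
$$
equipped with the metric $d(H_1,H_2)=\sup_{\tilde K}|H_1-H_2|$. For $H_1,H_2\in \mathcal{H}^+$ the difference $H_1-H_2$ is $\Z$-periodic in $y$ and therefore descends to a continuous function on the compact set $K$, so the supremum is finite and $\mathcal{H}^+$ is complete. The operator $T(H)(x,y)=H(F(x,y))/d$ maps $\mathcal{H}^+$ into itself: the periodicity follows from $F(x,y+1)=F(x,y)+(0,d)$, and the boundedness of $T(H)(x,y)-y$ reduces, after writing $F=(F_1,F_2)$ and splitting, to the boundedness of $F_2(x,y)/d-y$, which is again $\Z$-periodic in $y$ (since $F_2(x,y+1)=F_2(x,y)+d$) and hence bounded on $\tilde K$.

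The estimate $d(T(H_1),T(H_2))\le (1/|d|)\,d(H_1,H_2)$ is immediate from the definition, so $T$ is a contraction on the complete metric space $\mathcal{H}^+$ and admits a unique fixed point $H_F^+$. The fixed-point equation is precisely property (2), while properties (1) and (3) are built into the definition of $\mathcal{H}^+$. For property (4), iterating (2) gives $H_F^+(x_0,y_0)=H_F^+(x_n,y_n)/d^n$; since $|H_F^+(x_n,y_n)-y_n|$ is uniformly bounded by property (3), dividing by $d^n$ and letting $n\to\infty$ yields $H_F^+(x_0,y_0)=\lim_{n\to\infty}y_n/d^n$.

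The only delicate point compared to Lemma \ref{semi} is verifying that the sup metric is finite despite $\tilde K$ being noncompact, and this is handled entirely by $\Z$-periodicity in $y$ together with compactness of $K$. No further obstacle arises, and in particular no assumption on the first coordinate of $F$ (e.g., expansion in the $x$-direction) is needed.
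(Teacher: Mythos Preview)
Your proof is correct and follows exactly the approach the paper intends: the paper simply states that Lemma \ref{semi2} ``can be proved exactly as Lemma \ref{semi}'', and you carry out precisely that adaptation, with the only new ingredient being the observation that compactness of $K$ (equivalently, of $\tilde K/\Z$) makes the sup-metric finite. Your inclusion of the boundedness condition $\sup|H(x,y)-y|<\infty$ in the definition of $\mathcal H^+$ is harmless (it is automatic from $\Z$-periodicity and compactness of $K$) and in fact matches the paper's own formulation in Corollary \ref{acotado}.
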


As in Lemma \ref{semi}, there is also a continuous $H_F^-$ satisfying symmetric properties.

As before, the quotient map of the function $H^+_F$, denoted $h^+_F$, is well defined in $K$ because $H^+_F(x,y+1)=H^+_F(x,y)+1$,
and satisfies $h^+_Ff|_K=m_dh^+_F$.

Note that the existence of $H$ depends in Lemma \ref{semi2} on the existence of a compact invariant set; in particular, the
{\em rotation number} of the point $(x,y)$ and lift $F$, defined as the limit in item (4) of the lemma above, does not always exist.

However, if $f$ extends to a map of the closed annulus, then:

\begin{clly}
\label{cerrado}
If $f$ is a degree $d$ continuous self map of the closed annulus and $|d|>1$, then $f$ is semiconjugate to $m_d$ on $S^1$.
\end{clly}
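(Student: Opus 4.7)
The plan is essentially a direct application of Lemma \ref{semi2}, with the observation that the closed annulus itself supplies a compact invariant set. Specifically, I would take $K=\overline{A}=[0,1]\times S^1$, which is compact and satisfies $f(K)\subset K$ trivially. Although Lemma \ref{semi2} is stated in the setting of the open annulus, its proof only uses the completeness of the sup metric on
$$\mathcal{H}^+=\{H:\tilde{K}\to\R \text{ continuous}: H(x,y+1)=H(x,y)+1\},$$
which in turn relies on $\tilde{K}$ being compact modulo the $\Z$-translation on the universal cover. This holds here, since a fundamental domain for the $\Z$-action on $\tilde{K}=[0,1]\times\R$ is the compact square $[0,1]\times[0,1]$.

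With $\mathcal{H}^+$ a complete metric space, the contraction operator $T(H)(x,y)=H(F(x,y))/d$ (for $F$ any lift of $f$) has Lipschitz constant $1/|d|<1$, so Banach's fixed point theorem produces a unique $H^+_F\in\mathcal{H}^+$ with $H^+_F\circ F=dH^+_F$. Passing to the quotient yields a continuous map $h^+_F:\overline{A}\to S^1$ satisfying $h^+_F\circ f=m_d\circ h^+_F$. To finish, I would verify that $(h^+_F)_*:H_1(\overline{A})\to H_1(S^1)$ is an isomorphism; this is immediate from the normalization $H^+_F(x,y+1)=H^+_F(x,y)+1$, for the restriction of $h^+_F$ to any vertical circle $\{x\}\times S^1$ (which generates $H_1(\overline{A})\cong\Z$) lifts to the degree one map $y\mapsto H^+_F(x,y)$, so $(h^+_F)_*$ is multiplication by $1$.

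The only potential subtlety, and really the sole ``obstacle,'' is justifying the transfer of Lemma \ref{semi2} from the open to the closed annulus setting. But this is purely bookkeeping: the contraction argument uses nothing beyond continuity of $F$ and compactness of a fundamental domain for the $\Z$-action on $\tilde{K}$, both of which hold here. No new ideas are required beyond those already developed for the rotation number construction; the corollary is really just the statement that the hypothesis ``some compact invariant set exists'' is free of charge when the underlying space is itself compact.
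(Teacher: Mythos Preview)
Your proposal is correct and is essentially the same approach the paper intends: take $K=\overline{A}$ as the compact invariant set in Lemma~\ref{semi2}, so that the contraction argument on $\mathcal{H}^+$ produces $H_F^+$ with $H_F^+ F = dH_F^+$, and the quotient map is the desired semiconjugacy. Your added verification that $(h_F^+)_*$ is an isomorphism via the normalization $H_F^+(x,y+1)=H_F^+(x,y)+1$ is a welcome detail that the paper leaves implicit.
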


There is another kind of hypothesis implying the existence of a semiconjugacy.

\begin{clly}
\label{acotado}
If a lift $F$ of $f$ satisfies $\sup \{|y_1-dy_0|\}<\infty$, where $(x_0,y_0)$ is assumed to range over all of $\tilde A$, and $(x_1,y_1)$ denotes the $F$ image of $(x_0,y_0)$, then there exists a semiconjugacy $H$ satisfying properties (1) to (4) of the statement of Lemma \ref{semi2}.
\end{clly}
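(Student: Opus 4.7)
The plan is to mimic the proof of Lemma \ref{semi}, but with a different function space tailored to the hypothesis, since without a compact invariant set we cannot directly use a sup metric on $\tilde A$ for arbitrary continuous functions.

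First I would introduce the space
$$
\mathcal{H}^+ = \{H:\tilde A\to\R : H\text{ continuous},\ H(x,y+1)=H(x,y)+1,\ \sup_{(x,y)\in\tilde A} |H(x,y)-y|<\infty\},
$$
equipped with the metric $d(H_1,H_2) = \sup_{(x,y)\in\tilde A}|H_1(x,y)-H_2(x,y)|$. This sup is finite on $\mathcal{H}^+$ because $H_1-H_2 = (H_1-\mathrm{pr}_2) - (H_2-\mathrm{pr}_2)$, where $\mathrm{pr}_2(x,y)=y$, and each of the two summands is bounded by definition of $\mathcal{H}^+$. Uniform limits of continuous functions are continuous, the relation $H(x,y+1)=H(x,y)+1$ is preserved by uniform limits, and boundedness of $H-\mathrm{pr}_2$ survives uniform limits as well, so $\mathcal{H}^+$ is a complete metric space. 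Note also that $\mathcal{H}^+$ is nonempty since $\mathrm{pr}_2 \in \mathcal{H}^+$.

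Next I would define the operator $T:\mathcal{H}^+\to\mathcal{H}^+$ by $T(H)(x,y) = H(F(x,y))/d$. The verification that $T(H)(x,y+1)=T(H)(x,y)+1$ is immediate from $F(x,y+1)=F(x,y)+(0,d)$ and the translation property of $H$. The main step is checking that $T$ actually lands in $\mathcal{H}^+$, i.e., that $T(H)-\mathrm{pr}_2$ is bounded. Writing $(x_1,y_1)=F(x,y)$, we decompose
$$
T(H)(x,y) - y = \frac{H(x_1,y_1) - y_1}{d} + \frac{y_1 - dy}{d}.
$$
The first summand is bounded because $H\in\mathcal{H}^+$, and the second summand is bounded precisely by the hypothesis $\sup|y_1 - dy_0| < \infty$. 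This is the single place where the hypothesis is used, and it is the main (indeed only) obstacle to the argument; without it the operator $T$ might not preserve $\mathcal{H}^+$ and the contraction scheme would collapse.

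Once $T$ is known to preserve $\mathcal{H}^+$, the estimate $|T(H_1)(x,y)-T(H_2)(x,y)| \le |d|^{-1}d(H_1,H_2)$ shows $T$ is a contraction of ratio $|d|^{-1}<1$, so Banach's fixed point theorem produces a unique $H=H_F^+\in\mathcal{H}^+$ with $T(H_F^+)=H_F^+$, equivalently $H_F^+\circ F = d\, H_F^+$. Items (1), (2), (3) of Lemma \ref{semi2} are then immediate from the definition of $\mathcal{H}^+$. For item (4), exactly as in Lemma \ref{semi}, let $C$ be a bound for $|H_F^+(x,y)-y|$; applying this bound at $F^n(x,y)=(x_n,y_n)$ and using $H_F^+\circ F^n = d^n\, H_F^+$ gives $|H_F^+(x,y)-y_n/d^n|\le C/|d|^n$, so $H_F^+(x,y)=\lim_{n\to\infty} y_n/d^n$. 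A symmetric construction in the space with $H(x,y+1)=H(x,y)-1$ yields $H_F^-$, completing the proof.
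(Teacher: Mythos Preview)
Your proof is correct and follows essentially the same route as the paper: both introduce the space of continuous $H:\tilde A\to\R$ with $H(x,y+1)=H(x,y)+1$ and $\sup|H(x,y)-y|<\infty$, observe that the hypothesis on $F$ is exactly what is needed for $T(H)=\tfrac{1}{d}H\circ F$ to preserve this space, and then run the contraction argument of Lemma~\ref{semi}. You have supplied more detail (the decomposition $T(H)(x,y)-y=\tfrac{1}{d}(H(x_1,y_1)-y_1)+\tfrac{1}{d}(y_1-dy)$ and the verification of item~(4)), but the strategy is identical.
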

\begin{proof}
Define the space $\mathcal H_b$ as the set of continuous $H:\tilde A\to \R$ such that $H(x,y+1)=H(x,y)+1$ and require an additional condition:
$$
\sup \{|H(x,y)-y|\ :\ (x,y)\in \tilde A\}<\infty.
$$

Even that the functions $H$ in this space are not bounded, the metric $d(H_1,H_2)=\sup|H_1(x,y)-H_2(x,y)|$ is well defined on $\mathcal H_b$, which becomes a complete metric space.
Note that the hypothesis on $F$ implies that the operator $T(H)=\frac{1}{d}HF$ is a contraction from $\mathcal H_b$ to itself. The proof follows as that of Lemma \ref{semi}.
\end{proof}

We say that an open subset $U\subset A$ is {\it essential}, if $i_* (H_1 (U, \Z)) = \Z$, where $i_*: H_1 (U, \Z) \to H_1 (A,\Z)$ is the induced map in homology by the inclusion $i: U \to A$.  We say that a subset $X\subset A$
is {\it essential} if
any open neighbourhood of $X$ in $A$ is essential. Equivalently, a subset $X\subset A$ is essential if and only if it intersects every connector (see Definition \ref{connector} below).

\begin{lemma}
\label{sur} If $K$ is a compact essential subset of $A$, then for any lift $F$ of $f$, the map $h^+_F: K \to S^1$ is surjective.
\end{lemma}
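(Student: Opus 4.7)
The plan is to produce a connected subset of $\tilde K = \pi^{-1}(K)$ whose second coordinate $y$ is unbounded both above and below, and then to combine this with the bound $|H_F^+(x,y) - y| \le M$ from Lemma \ref{semi2}(3) to conclude that $H_F^+$ attains every real value on that subset, whence $h_F^+$ hits every point of $S^1$.

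First I would carry out a topological step. The essentiality of $K$ means that, for $\epsilon$ small enough, the open sets $(0,\epsilon)\times S^1$ and $(1-\epsilon,1)\times S^1$ are disjoint from $K$ and sit in different components of $A\setminus K$; lifting, the strips $(0,\epsilon)\times \R$ and $(1-\epsilon,1)\times \R$ sit in different components of $\tilde A\setminus \tilde K$. Pick $p=(x_p,y_p)$ in the first strip and $q=(x_q,y_q)$ in the second. Then $\tilde K$ separates $p$ from $q$ in $\tilde A\cong \R^2$, and I would invoke the classical fact that a closed separating subset of $\R^2$ contains a single connected component that already separates the two points, producing a component $C$ of $\tilde K$ that separates $p$ from $q$. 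I would then argue that $C$ must be unbounded above and below in $y$: if $\sup_{C} y = M < \infty$, the polygonal path from $p$ running vertically up to $(x_p, M+1)$, horizontally to $(x_q, M+1)$, and then vertically down to $q$ would miss $C$ entirely (its vertical legs stay in the $\tilde K$-free strips and its horizontal leg stays above the bound $M$), contradicting the separation. The symmetric argument gives $\inf_{C} y = -\infty$.

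The analytic step is then straightforward. Connectedness of $C$ and continuity of $y$, together with unboundedness above and below, give $y(C)=\R$. Lemma \ref{semi2}(3) furnishes a constant $M'$ with $|H_F^+(x,y)-y|\le M'$ on $\tilde K$, so $H_F^+(C)$ is a connected subset of $\R$ unbounded in both directions, hence $H_F^+(C)=\R$. Projecting, $h_F^+(\pi(C))=\pi_0(\R)=S^1$, and since $\pi(C)\subset K$, the restriction $h_F^+|_K$ is surjective.

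The main obstacle I expect is the plane-separation step: extracting from the mere essentiality of $K$ a single connected component of $\tilde K$ that vertically crosses the whole strip. The classical theorem is cleanest on $S^2$; applying it to the noncompact strip $\tilde A$ requires a little care, but once that is in hand the analytic conclusion is a direct consequence of the bounded-displacement property of $H_F^+$.
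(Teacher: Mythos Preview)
Your argument is correct, but it follows a genuinely different route from the paper's. The paper never works inside $\tilde K$ at all: it extends $h_F^+$ to a continuous map $h:U\to S^1$ on a neighbourhood $U$ of $K$ (which can be done with degree $1$, e.g.\ by extending the periodic function $H_F^+(x,y)-y$ via Tietze and adding $y$ back), observes that $h$ is surjective on every essential neighbourhood of $K$ because a degree-$1$ map on an essential loop is onto, and then passes to the limit along a shrinking sequence $U_n\searrow K$ using compactness. Your approach instead stays on $\tilde K$, invokes a Janiszewski-type separation theorem in the plane to extract a single component $C\subset\tilde K$ that blocks every path between the two boundary strips, shows $C$ is $y$-unbounded in both directions, and concludes via the bounded-displacement estimate from Lemma~\ref{semi2}(3). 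The paper's proof is softer and shorter, relying only on ANR extension and elementary degree theory; yours avoids any extension of $h_F^+$ off $K$ and actually yields the slightly sharper conclusion that $h_F^+$ is already surjective on a connected subset of $K$, at the cost of importing the plane-separation theorem, which is classical but not entirely trivial. Both are valid; the step you flag as the main obstacle (passing from $S^2$ to the open strip) is harmless since $(0,1)\times\R\cong\R^2$.
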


\begin{proof}
Extend $h_F^+$ to a continuous map $h : U \to S^1$, where $U$ is a
neighbourhood $K$. Note that $h$ has degree $1$, as it coincides
with $h_F$ over $K$, and $H_F(x+1) = H_F(x) + 1$ on $\tilde K$. It follows that if $\gamma$ is a simple closed nontrivial curve in $U$, then
$h(\gamma)=S^1$, so $h$ is surjective in $U$. Take $x \in S^1$ and a sequence of neighbourhoods of $K$, $U_n \subset U$ such that $\cap U_n=K$. For all $n\geq 0$ pick $x_n\in U_n$ such that $h(x_n)=x$. Take a convergent subsequence $\lim_{k\to \infty} x_{n_k} = z \in K$. Then, by continuity $h(z) = x$, finishing the proof.
\end{proof}

It also follows that under the hypothesis of this lemma, $H^+_F$ is surjective in $\pi^{-1}(K)$.

As another application, let $U$ be an open set homeomorphic to the annulus and $f:U\to f(U)$ a continuous map of degree $d$, $|d|>1$. Assume that $\overline{f(U)}\subset U$ and let
$K=\cap _{n\geq 0}f^n(U)$. Then $f|_K$ is semi-conjugate to $m_d$.

\subsection{The repeller argument.}
\begin{defi}
\label{connector}
A subset $C$ of $A$ is called a connector if it is connected, closed, and has accumulation points in both components of the boundary of $A$.
In other words, given $r>0$ there are points $(x,y)$ and $(x',y')$ in $C$ with $x<r$ and $x'>1-r$.
A connector is called trivial if is not essential.
\end{defi}

If $f$ is a degree $d$ covering map of the annulus, then each connected component of the preimage of a connector is a connector. The preimage of a trivial connector is equal to the disjoint union of $|d|$ trivial connectors.

\begin{rk}\label{con} There exists exactly one connected component of the complement of a trivial connector that contains contains a connector. Uniqueness follows from the fact that the union of two disjoint connectors separates the annulus, and so the existence of two connectors in different components of the complement of $C$ implies that $C$ is disconnected.  To prove existence, take an open trivial neighbourhood $U$ of C.  Then, the complement of $U$ contains a (trivial) connector.
\end{rk}
When $C$ is trivial, the component of $A\setminus C$ that contains a connector is called the big component of the complement of $C$.

\begin{rk}  If $C$ is a trivial connector, then $\pi (\tilde C) = C$ for any connected component $\tilde C$ of $\pi ^{-1} (C)$.  To see this take $C'$ a trivial connector such that $C\cap C' = \emptyset$ (see remark \ref{con}).  Then, the big component of $A\backslash C'$ is a topological disc $D$ containing $C$.  Then, $\pi ^{-1} (D)$ is a disjoint union of disks $D_{\alpha}$, each one of them mapped homeomorphically onto $D$.  Then, the assertion follows.
\end{rk}

It follows that $\tilde C$ separates $\tilde C -(1,0) $ from $\tilde C +(1,0)$ for any connected component $\tilde C$ of $\pi ^{-1} (C)$.  Note that this induces an order among families of parirwise disjoint connectors in $A$ as follows. Firstly we fix a connector $C$ in the family and take  $\tilde C$ a connected component of $\pi ^{-1} (C)$.  This is the first element for the order. Let $C_1$ and $C_2$ be any two other connectors in the family.  We say that $C_1< C_2$ if the preimages $\tilde C_1$ and $\tilde C_2$ of $C_1$ and $C_2$ under $\pi$ that separate $\tilde C$ and $\tilde C + (1,0)$ verify that $\tilde C_1$ separates $\tilde C$ from $\tilde C_2$.

\begin{defi}
A connector $C$ is repelling for a map $f$ if $f(C)=C$, there exists a trivial open neighborhood $U$ of $C$ such that $f(U)$ contains the closure of $U$ and every point in $U$ has a preorbit contained in $U$ that converges to $C$.
\end{defi}

\begin{defi}
A connector $C$ is called free for a covering map $f$ if $f(C)\cap C=\emptyset$.
\end{defi}

\begin{prop}
\label{repeller}
If $f$ is a degree $d$ covering of the open annulus $A$ having a trivial free connector, then there exist $|d-1|$ repelling connectors for $f$.
\end{prop}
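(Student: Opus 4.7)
My approach is to work in the universal cover, construct a Markov-type partition of a fundamental strip, and produce each invariant connector as a nested intersection generated by a pullback operator; the repelling property then falls out of the same contraction.

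First, $f(C)$ is a trivial connector disjoint from $C$: it is connected and accumulates on both ends of $A$ (since a finite-degree covering of $A$ is proper), and the connector $C$ is itself disjoint from $f(C)$, which forces triviality. By the observation immediately following Definition~\ref{connector}, $f^{-1}(C)=E_0\sqcup\cdots\sqcup E_{|d|-1}$ is a disjoint union of $|d|$ trivial connectors, all contained in the big component $V$ of $A\setminus C$ (small components contain no connector). Lift to $\tilde A=(0,1)\times\R$ via $\pi$, fix a lift $F$ of $f$, and set $\tilde C_n=\tilde C_0+(0,n)$. Each $\tilde C_n$ separates $\tilde A$, and freeness of $C$ places $F(\tilde C_0)$, disjoint from every $\tilde C_n$, in a unique open strip $S_m$ between $\tilde C_m$ and $\tilde C_{m+1}$; equivariance then yields $F(\tilde C_n)\subseteq S_{m+dn}$.

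The $|d|$ preimages $F^{-1}(\tilde C_k)$ for $k$ in a set of $|d|$ consecutive integers all lie inside $S_0$ and split it into $|d|+1$ open sub-strips $\mathcal T_0,\ldots,\mathcal T_{|d|}$. For each $j$ let $s_j$ be the integer with $F(\mathcal T_j)\subseteq S_{s_j}$: in the $|d|-1$ "full Markov" interior indices one has $F(\mathcal T_j)=S_{s_j}$, while in the two "partial Markov" boundary indices $F(\mathcal T_j)\subsetneq S_{s_j}$. Define the pullback operator
\[
G_{s_j}(Z):=F^{-1}(Z+(0,s_j)).
\]
In the full-Markov cases, $G_{s_j}(\mathcal T_j)\subseteq\mathcal T_j$ is immediate because $\mathcal T_j+(0,s_j)\subseteq S_{s_j}=F(\mathcal T_j)$ implies $G_{s_j}(\mathcal T_j)\subseteq F^{-1}(S_{s_j})=\mathcal T_j$; for $d<0$ the two partial cases also satisfy $G_{s_j}(\mathcal T_j)\subseteq\mathcal T_j$, because orientation reversal by $F$ aligns a common boundary of $\mathcal T_j+(0,s_j)$ and $F(\mathcal T_j)$ and the $|d|$-fold expansion of $F$ across $F(S_0)$ forces $\mathcal T_j+(0,s_j)$ to be the "small end" of $F(\mathcal T_j)$. (For $d>0$ the partial cases do not contribute and are not needed, since the full cases already provide $d-1=|d-1|$ sub-strips.) In every admissible case the nested sequence $\mathcal T_j\supseteq G_{s_j}(\mathcal T_j)\supseteq G_{s_j}^2(\mathcal T_j)\supseteq\cdots$ has intersection $\tilde\Gamma_j:=\bigcap_{n\ge 0}\overline{G_{s_j}^n(\mathcal T_j)}$, a closed connected subset of $\tilde A$ accumulating at both ends (inherited from the bounding connectors), hence a connector; by construction $F(\tilde\Gamma_j)=\tilde\Gamma_j+(0,s_j)$, so $\Gamma_j:=\pi(\tilde\Gamma_j)$ is an invariant trivial connector of $f$ lying in $V$. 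The $|d-1|$ connectors obtained are pairwise disjoint (disjointness of the $\mathcal T_j$ and injectivity of $\pi$ on $S_0$) and carry distinct rotation numbers $s_j/(d-1)\bmod\Z$. For the repelling property of each $\Gamma_j$, take $U=\pi(\mathcal T_j)$: the Markov expansion of $F$ on $\mathcal T_j$ gives $f(U)\supseteq\overline U$, and iterating $G_{s_j}$ at any $p\in U$ produces a preorbit inside $U$ converging to $\Gamma_j$, since the widths of $G_{s_j}^n(\mathcal T_j)$ shrink to zero.

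The main technical obstacle is this width-collapse claim: one must prove that the transverse widths of $G_{s_j}^n(\mathcal T_j)$ tend to zero as $n\to\infty$, so that the nested intersection is a single connector rather than a positive-width invariant strip. Heuristically, $F$ expands $\mathcal T_j$ across a (full or partial) fundamental domain of the $\Z$-action, making $F^{-1}$ a transverse contraction and the iteration geometrically convergent; rigorously, the argument is a Cauchy-in-Hausdorff convergence of the pair of bounding connectors, in the spirit of the Banach fixed-point step used in the proof of Lemma~\ref{semi2}.
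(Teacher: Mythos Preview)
Your approach is essentially the paper's, relocated to the universal cover: both build each invariant connector as a nested intersection of a region $D$ satisfying $f(D)\supset\overline D$, and both verify closedness, connectedness, and accumulation on both ends. The paper works directly in $A$: it takes the $|d|$ components $D_1,\ldots,D_{|d|}$ of the big part of $A\setminus f^{-1}(C)$, notes that $C$ lies in exactly one of them, and for each of the others sets $\tilde D_i=\bigcap_{n\ge0} f^{-n}(D_i)$. Your sub-strips $\mathcal T_j$ are lifts of these $D_i$, and your operator $G_{s_j}$ is precisely the branch of $f^{-1}$ landing in $D_i$; the extra bookkeeping with strips $S_m$ and shifts $s_j$ buys you the rotation-number remark but is not needed for the proposition itself.

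Your ``main technical obstacle'' is not one. A connector need not have zero transverse width: the definition asks only for closed, connected, and accumulating on both ends, all of which the nested intersection already has. For the convergence of preorbits to $\Gamma_j$, no width estimate is required either: in the two-point compactification $A^*$ the sets $\overline{\pi(G_{s_j}^n(\mathcal T_j))}\cup\{N,S\}$ form a decreasing sequence of compacta with intersection $\Gamma_j\cup\{N,S\}$, so any sequence $p_{-n}$ with $p_{-n}$ in the $n$-th set automatically has all its $A^*$-accumulation points there. This is why the paper can simply write ``invariant repeller by construction''; there is no hidden contraction step in the spirit of Lemma~\ref{semi2}.

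For $d<-1$ the two arguments diverge in organization. The paper first extracts a single invariant connector $C'$ from the positive-degree construction, then reruns the argument with the $|d|$ components of $f^{-1}(C')$, using orientation reversal only to handle the two regions adjacent to $C'$; this yields $|d|$ new connectors disjoint from $C'$, hence $|d|+1=|d-1|$ in total. Your single-pass version, treating the two partial-Markov boundary strips directly, reaches the same count, but your justification that $G_{s_j}(\mathcal T_j)\subset\mathcal T_j$ in those two cases (``aligns a common boundary \ldots forces $\mathcal T_j+(0,s_j)$ to be the small end'') is hand-wavy. The paper's two-step route sidesteps this by always working with regions bounded on both sides by genuine preimage connectors of an invariant $C'$.
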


\begin{proof}
Begin assuming $d>0$. Let $C$ be a trivial free connector and $C_1,\ldots,C_d$ the components of $f^{-1}(C)$. Denote by $D_1,\ldots,D_d$ the components of $A\setminus f^{-1}(C)$ that contain connectors (note that $D_1\cup D_2\cup\cdots\cup D_d$ is equal to the intersection of the big components of $A\setminus C_1,\ldots,A\setminus C_d$).

Moreover, for each $1\leq i\leq d$, the set $f(D_i)$ is equal to the big component of $A\setminus C$.
On the other hand, as $C$ is free, it follows that $f^{-1}(C)\cap C=\emptyset$. Thus $C$ is contained in some $D_i$, say $D_d$, to simplify.
It follows that $f(D_i)$ contains the closure of $D_i$ for all $1\leq i\leq d-1$, but not for $i=d$.

Let $i\neq d$ and define $\tilde D_i$ as the intersection for $n\geq 0$ of the sets
$D_i^{(n)}:=\cap_{j=0}^nf^{-j}(D_i)$.
Note that each $D_i^{(n)}$ is equal to a connected component of $f^{-n}(D_i)$, this follows easily by induction.

Then $\tilde D_i$ is a repelling connector:
\begin{enumerate}
\item
It is closed in the open annulus because for every $n>0$ the closure of $D_i^{(n)}$ is contained in $D_i^{(n-1)}$.
\item
It is connected because each $D_i^{(n)}$ is connected and the sequence is decreasing.
\item $f(\tilde D_i) = \tilde D_i$.

\end{enumerate}

Next consider the case $d<-1$, and let $d_0=-d$. The precedeing argument shows that there exists at least one invariant connector $C'$.
To obtain the result, $d_0$ extra repelling connectors are needed. Let $C_i$ denote the components of the preimage of $C'$, so $f^{-1}(C')=C'\cup C_1\cup C_2\cdots\cup C_{d_0-1}$. This determines $d_0$ regions, each one of which satisfies $f(D_i)\supset \bar D_i$, excepting for two regions, say $D_1$ and $D_2$, that have $C'$ in one of its boundary components. For these two components, it is only known that
$f(D_i)$ contains $D_i$, but not its closure. However, using that $d$ is negative, it holds that if $i=1,2$ then the closure of $D_i\cap f^{-1}(D_i)$ is disjoint with $C'$, so $f(D_i)$ contains the closure of $D_i\cap f^{-1}(D_i)$. In any case ($1\leq i\leq d_0$), the intersection $\cap_{n\geq 0} f^{-n}(D_i)$ defines a repelling invariant connector, disjoint from $C'$. Thus we found $d_0+1=|d-1|$ repelling connectors.
\end{proof}

It also follows from the construction that the complement of each repeller $\tilde D_i$ is connected. This will be used in the next result.

\begin{clly}
\label{rep1}
Let $f$ be a degree $d$ covering of the open annulus and assume that there exists a trivial free connector $C$. Then there exists a semiconjugacy between $f$ and $m_d$ in $S^1$.
\end{clly}
\begin{proof}
By proposition \ref{repeller} there exist $C$ a repelling connector.  Given $n\in \N$, let $f^{-n} (C) = \{C_0 =C, C_1, \ldots, C_{d^n -1}\}$ ordered as explained above with $C_0$ as the first element and such that $C_i < C_j$ if and only if $i<j$.  Define $h(\tilde C_j)=exp(2\pi i j/d^n)$ .

It follows that $h$ is defined on the set $\cup _{i=0}^\infty f ^{-n} (C)$.  This function is uniquely extended to the set $X$ equal to the closure of

$\cup _{i=0}^\infty f ^{-n} (C)$.  Not that, by construction,  if $U$ is a connected component of $A\backslash X$, then $h$ is constant in $\partial U$. Defining $h$ on $U$ as this constant, we have a continuous map $h$ defined on $A$.  By construction, $h$ is a semiconjugacy from $f$ to $m_d$ on $X$ and then on $A$.

%
%
%
\end{proof}

\begin{clly}
\label{rep2}
Let $f$ be a degree $d$ covering of the open annulus and assume that there exists a trivial connector $C$ such that $f(C)=C$. Then there exists a semiconjugacy between $f$ and $m_d$.
\end{clly}
\begin{proof} The preimage of a trivial connector $C$ has exactly $|d|$ components, each one of which is a trivial connector. So, if $C$ is invariant, then $f^{-1}(C)$ is equal to the union of $C$ plus $|d|-1$ connectors $C_1,\ldots,C_{|d|-1}$. Each of the $C_i$ is a trivial free connector, hence the previous corollary applies directly.
\end{proof}

Here we have an application:

\noindent
{\bf Example.} Let $f(x,z)=(\varphi(x),z^2\exp(2\pi i/(1-x)))$, where $\varphi(x)$ is a homeomorphism of $[0,1]$ such that every point has $\omega$-limit set equal to $1$.
Note that $f$ cannot be extended to the closed annulus.  However, using Corollary \ref{rep2}, there exists a semiconjugacy between $f$ and $m_2$.  Indeed, take $p\in A$
and a simple arc $\gamma$ joining $p$ and $f(p)$ such that $\gamma(t) = (x(t), z(t))$ with $x(t)$ an increasing function.  So, $C=\cup_{n\in \Z}\gamma _n$ is an invariant
connector, where $\gamma_n = f^n (\gamma)$ for $n\geq 0$, and for $n<0$, $\gamma _n$ is defined by induction, beginning with $\gamma_{-1}$ being the lift of
$\gamma(1-t)$ starting at $x$.  Note that $C$ is trivial, due to the choice of $\gamma$ with increasing $x(t)$ and the formula for $f$.

\section{Necessary conditions.}

Throughout this section we will assume that there exists a semiconjugacy $h$ between a covering map $f$ of degree $d$ ($d>1$) of the annulus
and $m_d(z)=z^d$ in $S^1$.  It is as always assumed that the map $h_*$ induced by $h$ on homotopy is an isomorphism.

\begin{lemma}
\label{l1.1}
Let $p$ and $q$ be fixed points of $f$. The following conditions are equivalent:
\begin{enumerate}
\item
There exists a curve $\gamma$ from $p$ to $q$ such that $\gamma_n(1)=q$ for every $n>0$, where $\gamma_n$ is the unique lift of $\gamma$
by $f^n$ that begins at $p$.
\item
There exists a curve $\gamma$ from $p$ to $q$ such that $f\gamma\sim \gamma$ where $\sim$ means homotopic relative to endpoints.
\item
$h(p)=h(q)$.
\end{enumerate}
\end{lemma}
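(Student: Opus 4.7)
The plan is to establish the cycle $(3)\Rightarrow(2)\Rightarrow(1)\Rightarrow(3)$, using throughout that $h_*$ is an isomorphism on $\pi_1$ as well as on $H_1$ (since $\pi_1(A)$ and $\pi_1(S^1)$ coincide with $H_1$) together with the semiconjugacy identity $hf = m_d h$.

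For $(3)\Rightarrow(2)$: write $a = h(p) = h(q)$ and pick any curve $\gamma_0$ from $p$ to $q$; then $h\gamma_0$ is a loop at $a$ in $S^1$. Letting $\delta$ be a generator of $\pi_1(A,q)$, so that $h_*[\delta]$ generates $\pi_1(S^1,a)$, I would replace $\gamma_0$ by $\gamma = \gamma_0 \cdot \delta^{-k}$, choosing $k$ so that $h\gamma$ has winding zero, i.e.\ is nullhomotopic at $a$; then $hf\gamma = m_d(h\gamma)$ is also nullhomotopic. Hence the loop $\gamma \cdot (f\gamma)^{-1}$ based at $p$ is sent by $h_*$ to the nullhomotopic loop $h\gamma \cdot (m_d h\gamma)^{-1}$, and injectivity of $h_*$ forces $\gamma \cdot (f\gamma)^{-1}$ to be nullhomotopic in $A$, i.e.\ $f\gamma \sim \gamma$ rel endpoints.

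For $(2)\Rightarrow(1)$: given a homotopy $H\colon [0,1]^2 \to A$ from $f\gamma$ to $\gamma$ rel endpoints, I would lift $H$ through the covering $f$ to $\tilde H$ with $\tilde H(\cdot,0) = \gamma$ (since $\gamma$ is itself a lift of $f\gamma$ via $f$ starting at $p$). Lifts of the constant paths at the fixed points $p$ and $q$ remain constant, so $\tilde H(0,\cdot)\equiv p$ and $\tilde H(1,\cdot)\equiv q$. Therefore $\tilde H(\cdot,1)$ is the unique lift of $\gamma$ via $f$ starting at $p$, namely $\gamma_1$, giving both $\gamma_1(1) = q$ and $\gamma_1 \sim \gamma$ rel endpoints. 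Since then $f\gamma_1 = \gamma \sim \gamma_1$, iterating the same argument with $\gamma_1$ in the role of $\gamma$ yields $\gamma_2(1) = q$, and inductively $\gamma_n(1) = q$ for every $n > 0$.

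For $(1)\Rightarrow(3)$: write $h(p) = e^{2\pi i A}$ and $h(q) = e^{2\pi i B}$, noting that $(d-1)A,(d-1)B \in \Z$ because $a$ and $b$ are fixed by $m_d$. Choose lifts to $\R$ with $\widetilde{h\gamma}(0) = A$ and $\widetilde{h\gamma_n}(0) = A$; the relation $(h\gamma_n)^{d^n} = h\gamma$ together with evaluation at $t=0$ gives the explicit formula
\[
\widetilde{h\gamma_n}(t) = A + d^{-n}\bigl(\widetilde{h\gamma}(t) - A\bigr).
\]
Since $\widetilde{h\gamma}$ is bounded on $[0,1]$, $\widetilde{h\gamma_n}(1) \to A$ as $n\to\infty$. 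On the other hand, condition (1) forces $h\gamma_n(1) = h(q)$, so $\widetilde{h\gamma_n}(1) \in B + \Z$. A sequence in the discrete set $B+\Z$ can converge only by being eventually constant, so $A \in B + \Z$, that is, $h(p) = h(q)$. The main obstacle is precisely this last step: condition (1) only tells us that $\gamma_n(1)$ lies in the shrinking fibre of $m_d^n$ over $h(q)$, and one must pit the contraction by $d^{-n}$ against the discreteness of $B+\Z$ to actually force $h(p) = h(q)$.
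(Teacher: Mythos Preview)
Your argument is correct, and for $(3)\Rightarrow(2)$ and $(2)\Rightarrow(1)$ it is essentially the paper's argument rephrased in the language of homotopy lifting rather than universal-cover lifts.

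The genuine difference is in how you close the cycle. The paper does not prove $(1)\Rightarrow(3)$ directly; it instead shows $(1)\Rightarrow(2)$ by an algebraic divisibility argument: if $\beta=\gamma^{-1}\cdot f\gamma$ were homotopically nontrivial, then for each $n$ the loop $\gamma_n^{-1}\cdot\gamma_{n-1}$ is a closed $f^n$-lift of $\beta$, forcing $[\beta]\in f_*^n(\Z)=d^n\Z$ for all $n$, hence $[\beta]=0$. It then combines this with $(2)\Rightarrow(3)$, which is a short computation: if $f\gamma\sim\gamma$ then $m_dh\gamma\sim h\gamma$, and a path in $S^1$ between fixed points of $m_d$ satisfying this must have equal endpoints. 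Your route bypasses $(2)$ entirely on the return trip, instead exploiting the explicit contraction $\widetilde{h\gamma_n}(1)=A+d^{-n}(\widetilde{h\gamma}(1)-A)$ together with the discreteness of $B+\Z$. Both arguments hinge on $|d|>1$: the paper uses $\bigcap_n d^n\Z=\{0\}$, you use $d^{-n}\to 0$. The paper's approach is slightly more economical in that $(2)\Rightarrow(3)$ is a one-liner and $(1)\Rightarrow(2)$ reuses the structure already set up; your direct $(1)\Rightarrow(3)$ is self-contained and makes the role of the semiconjugacy $h$ more visible, at the cost of the small bookkeeping needed to normalize the lifts at $t=0$ (which relies on $(d-1)A\in\Z$).
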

\begin{proof}
(1) implies (2): Let $\gamma$ be as in (1). If $f(\gamma)$ is not homotopic to $\gamma$ then $\beta=\gamma^{-1}.f(\gamma)$ is not null-homotopic.
By hypothesis, the lift of $\beta$ under $f^n$ is the closed curve $\gamma_n^{-1}.\gamma_{n+1}$
for every $n\geq 0$; then the curve $\beta\neq 0$ belongs to $\cap_{j\geq 0} f_*^j(\Z)$, where $f_*$ is the induced map in homology. This cannot hold since $f_*$ is multiplication by $d$.\\
(2) implies (1):
If $f(\gamma)$ is homotopic to $\gamma$, then the final point of $\gamma_1$ is $q$, because $\gamma_1^{-1}.\gamma$ is the lift of the null- homotopic curve
$\gamma^{-1}.f(\gamma)$. It follows also that $\gamma_1^{-1}.\gamma$ is null-homotopic; hence, as $\gamma_2$ is the lift of of $\gamma_1$ that begins at $p$ the same argument shows that $\gamma_2^{-1}.\gamma_1$ is null-homotopic. Then proceed by induction.\\
(3) implies (2): Let $\tilde p$ be a lift of $p$ and  $F$ a lift of $f$ that fixes $\tilde p$. If $H$ is any lift of $h$, then there exists $k\in \Z$ such that $HF= dH +k$. As $F(\tilde p ) = \tilde p$, one gets $H(\tilde p) = d H(\tilde p) + k$ and so $H(\tilde p ) (1-d) = k$. That is, $H(\tilde p )= \frac{k}{1-d}$. As $h(p) = h(q)$, if $\tilde q$ is a lift of $q$, then there exists $l\in \Z$ such that $H(\tilde q) =\frac{k}{1-d} +l$, and so $H (\tilde q -l) = \frac{k}{1-d}$. We want to prove now that $F(\tilde q -l) = \tilde q -l$, because then any arc joining $\tilde p$ and $\tilde q -l$ projects to an arc joining $p$ and $q$ which is homotopic to its image, proving (2).

We know that there exists $j\in \Z$ such that $F(\tilde q) = \tilde q +j$, as $q$ is fixed by $f$. Then, $F(\tilde q -l)= \tilde q +j -dl$ and $HF(\tilde q-l) = \frac{k}{1-d}+j +l (1-d) = dH(\tilde q -l)+k = d \frac{k}{1-d}+k$. So $d \frac{k}{1-d}+k = \frac{k}{1-d}+j +l (1-d)$, $(d-1) \frac{k}{1-d}+k = j +l (1-d)$, $j= -l (1-d)$. This gives $F(\tilde q-l) = \tilde q-l (1-d)-dl = \tilde q-l$ as wanted.

(2) implies (3): Let $\gamma$ be as in (2). Then, $h(\gamma)\sim hf(\gamma)= m_dh(\gamma)$.  But $h(\gamma)$ joins $h(p)$ to $h(q)$, fixed points of $m_d$.  So,
$h(\gamma)\sim m_dh(\gamma)$ implies $h(p) = h(q)$. To see this, note that any lift of the map $m_d$ has exactly one fixed point, which is equivalent to the fact that if $h(p)$ and $h(q)$ are different fixed points of $m_d$ any arc connecting them is not homotopic to its image by $m_d$.
\end{proof}

\subsection{Connectivity.}

Denote by $A^*$ the compactification of the annulus with two points, that is $A^*=A\cup\{N,S\}$. Considered with its usual topology, it is
homeomorphic to the two-sphere. We will use the spherical metric in $A^*$.

\begin{lemma}
\label{conn}
Let $h$ be a semiconjugacy between $f$ and $m_d$. Then $h^{-1}(z)$ contains a connector, for every $z\in S^1$.
\end{lemma}
\begin{proof}
Note that $h^{-1}(z)$ is a closed subset of the open annulus. Proving that the compact set $K=h^{-1}(z)\cup\{N,S\}\subset A^*$ contains a connector (that is, has a component containing $N$ and $S$) is equivalent to the thesis of the lemma.
By compactness of $K$, there exists a sequence $\mathcal U_n$ of finite coverings of $K$ satisfying the following properties:
\begin{enumerate}

\item
Each element of $\mathcal U_n$ is an open disc.
\item
$\mathcal U_{n+1}$ is a refinement of $\mathcal U_n$.
\item
The distance from $K$ to the complement of the union of the elements of $\mathcal U_n$ is less than $1/n$.

\end{enumerate}

If $K_n$ denotes the closure of the union of the elements of $\mathcal U_n$, then the distance from $K$ to the complement of $K_n$ is less than $1/n$, so the intersection of the $K_n$ is equal to $K$. Moreover, each $K_n$ is the union of a finite number of closed discs.

It is claimed claim now that $K$ contains a connector if and only if any $K_n$ does. One implication is trivial; to prove the other, assume that every $K_n$ contains a connector. As each $K_n$ has finitely many components, there exists a nested sequence of connectors $C_n\subset K_n$. Then $\cap _n C_n$ is a connector contained in $K$.

Therefore, if $K$ does not contain a connector, then some $K_n$ does not contain a connector, and as $K_n$ is a finite union of closed discs, it is easy to find a closed nontrivial curve $\gamma$ in the complement of $K_n$, thus in the complement of $K$. This is a contradiction since $h(\gamma)=S^1$ for every nontrivial $\gamma$ because $h_*$ is an isomorphism.
\end{proof}

The following result completed with Corollary \ref{rep2} implies that $f$ is semiconjugate to $m_d$ if and only if there exists a trivial invariant connector.

\begin{clly}
\label{connec}
With $f$ as above, there exists an invariant trivial connector contained in $h^{-1}(1)$.
\end{clly}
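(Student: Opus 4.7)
The plan is to apply Brouwer's reduction theorem in the end compactification $A^* = A \cup \{N,S\} \cong S^2$, building on Lemma \ref{conn}, which already supplies at least one connector inside $h^{-1}(1)$. Since a finite-degree covering $f\colon A \to A$ is proper, it extends continuously to $\tilde f\colon A^* \to A^*$ with $\tilde f(\{N,S\}) \subseteq \{N,S\}$; after replacing $f$ with $f^2$ if $\tilde f$ swaps the two ends, we may assume $\tilde f$ fixes both $N$ and $S$, deducing the $f$-invariant statement from the $f^2$-invariant one at the end. Because $m_d(1) = 1$, the closed set $\tilde K := h^{-1}(1) \cup \{N,S\}$ is forward $\tilde f$-invariant, and by Lemma \ref{conn} the connected component $Q$ of $\tilde K$ containing $N$ also contains $S$ (the connector provided by the lemma, together with $\{N,S\}$, is a connected subset of $\tilde K$ linking the ends). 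A routine check then shows $\tilde f(Q) \subseteq Q$.

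Next I would invoke Brouwer's reduction theorem for the family
\[
\mathcal F = \{\, Y \subseteq \tilde K : Y \text{ closed, connected, } \{N,S\} \subseteq Y,\ \tilde f(Y) \subseteq Y \,\}.
\]
It is nonempty ($Q \in \mathcal F$) and stable under decreasing chain intersections, since a nested intersection of compact connected subsets of a compact Hausdorff space is connected and invariance passes to the intersection. Let $Y$ be a minimal element; minimality forces $\tilde f(Y) = Y$, because $\tilde f(Y) \in \mathcal F$ and any strict inclusion would contradict minimality.

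The key step, which I expect to be the main obstacle, is to show that $C := Y \setminus \{N,S\}$ is connected, hence a connector contained in $h^{-1}(1)$. I would argue by contradiction: suppose $Y \setminus \{S\}$ is disconnected and let $C_N$ denote its component containing $N$. If $S \in \overline{C_N}$, then $C_N \cup \{S\}$ is a proper closed connected subset of $Y$ containing $\{N,S\}$, and $\tilde f(C_N) \subseteq C_N$ because $\tilde f(C_N)$ is a connected subset of $Y \setminus \{S\}$ meeting $N = \tilde f(N)$; this contradicts minimality. The delicate subcase $S \notin \overline{C_N}$ is handled by pairing with the symmetric analysis of $Y \setminus \{N\}$ and exploiting the connectedness of $Y$ to assemble a proper $\tilde f$-invariant subset in $\mathcal F$. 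The difficulty is intrinsic: removing two points from a general continuum can create disconnections (pseudo-arc-type pathology), and it is precisely minimality that rules this out here.

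Once $C$ is a connector, its triviality is automatic: if a neighborhood $U \supseteq C$ were essential in $A$, $U$ would contain a loop $\gamma$ whose class generates $H_1(A) = \mathbb Z$, but then $h(\gamma)$ would lie in a small arc around $1 \in S^1$ and be null-homotopic, contradicting that $h_*$ is an isomorphism. The invariance $f(C) = C$ follows from $\tilde f(Y) = Y$ together with $\tilde f|_{\{N,S\}} = \mathrm{id}$. The case in which $\tilde f$ swaps $N$ and $S$ is handled by applying the argument to $f^2$ and then combining the resulting $f^2$-invariant connector with its $f$-image, using minimality once more to force these to coincide.
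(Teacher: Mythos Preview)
Your approach is substantially more elaborate than the paper's, and the step you yourself flag as delicate contains a real gap.

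The paper's argument avoids the end compactification and Brouwer reduction entirely. It simply takes $C$ to be a connected \emph{component} of $h^{-1}(1)$ that happens to be a connector (Lemma~\ref{conn} guarantees one exists). Triviality of $C$ is immediate, since any connector in $h^{-1}(z)$ for $z\neq 1$ is disjoint from $C$. Because $h(f(C))=m_d(h(C))=\{1\}$, the connected set $f(C)$ lies in $h^{-1}(1)$; hence either $f(C)\cap C=\emptyset$, in which case $C$ is a trivial free connector and the repeller argument (Proposition~\ref{repeller}) produces an invariant connector, or $f(C)$ meets the component $C$ and therefore is contained in it, giving $f(C)\subset C$ at once. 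No Zorn-type minimality and no continuum-theoretic analysis of $Y\setminus\{N,S\}$ are needed.

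As for your argument: your contradiction hypothesis is ``$Y\setminus\{S\}$ is disconnected'', but what you actually need to rule out is ``$Y\setminus\{N,S\}$ is disconnected'', which is a strictly weaker statement about $Y$. Even if you succeed in showing that both $Y\setminus\{S\}$ and $Y\setminus\{N\}$ are connected (and your case~(a), together with the boundary-bumping theorem to render case~(b) vacuous, would indeed give this), it does \emph{not} follow that $Y\setminus\{N,S\}$ is connected. A theta-curve---two arcs from $N$ to $S$ meeting only at their endpoints---has both one-point complements connected while $Y\setminus\{N,S\}$ has two components. You would need a separate argument to exclude such continua under minimality, and if $\tilde f$ permutes the two arcs then neither arc alone is $\tilde f$-invariant, so minimality is not obviously violated. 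Your sketch does not address this, and the appeal to ``pairing with the symmetric analysis'' does not close the gap.
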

\begin{proof}
Use the previous lemma to obtain a component $C$ of $h^{-1}(1)$ that is a connector. Note that
$C$ is trivial because it is disjoint to a connector in $h^{-1}(z)$ for every $z$, $z \neq 1$. If $C$ is not free, then it is invariant ($f(C)\subset C$) as $fh^{-1}(1)\subset h^{-1}(1)$ , and if it is free, then the repeller argument (section \ref{repeller}) provides an invariant connector.
\end{proof}

We will use the following two propositions to construct an example of a $d:1$ covering of the annulus that is not semi-conjugate to $m_d$.
Note that any lift $H$ of the semiconjugacy $h$ satisfies $H(x,y+1)=H(x,y)\pm 1$, since $h_*$ is an isomorphism.

\begin{prop}
\label{liftsemi}
Given a compact set $L\subset (0,1)$ there exists a constant $M$
such that $|H(x,y)-y|\leq M$ for every $x\in L$ and $y\in \R$ whenever $H$ is a lift of $h$ such that $H(x,y+1)=H(x,y)+ 1$.
\end{prop}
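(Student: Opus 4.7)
The plan is very short: the estimate is essentially a statement that a certain continuous $\mathbb{Z}$-periodic function on $L\times S^1$ is bounded by compactness.

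Define $\phi:\tilde A\to\R$ by $\phi(x,y)=H(x,y)-y$. Since $H$ is continuous on $\tilde A=(0,1)\times\R$, so is $\phi$. Using the hypothesis on $H$ I compute
\[
\phi(x,y+1)=H(x,y+1)-(y+1)=H(x,y)+1-(y+1)=H(x,y)-y=\phi(x,y),
\]
so $\phi$ is $\Z$-periodic in the second coordinate. Therefore $\phi$ descends to a continuous function $\bar\phi:(0,1)\times S^1\to\R$.

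Now I use the hypothesis that $L\subset(0,1)$ is compact: the set $L\times S^1\subset(0,1)\times S^1$ is compact, so $\bar\phi$ attains a maximum of its absolute value on it. Setting $M$ equal to this maximum and pulling back through the quotient gives $|H(x,y)-y|\le M$ for every $x\in L$ and every $y\in\R$, as required.

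The statement is a purely elementary compactness observation once the right periodicity is isolated, so there is really no obstacle: the only thing worth pointing out is that one must use the hypothesis $H(x,y+1)=H(x,y)+1$ (not $-1$) to make $\phi(x,y)=H(x,y)-y$ the periodic combination; the other choice of lift would make $H(x,y)+y$ the right periodic function, and a symmetric statement $|H(x,y)+y|\le M$ would follow. The compactness of $L$ inside the open interval $(0,1)$ is essential because $\bar\phi$ need not extend continuously to the closed annulus; this is exactly why the constant $M$ is allowed to depend on $L$.
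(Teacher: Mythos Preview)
Your proof is correct and follows essentially the same approach as the paper: the paper observes that $|H(x,y)-y|$ is bounded on the compact set $L\times[0,1]$ and then uses the relation $H(x,y+1)=H(x,y)+1$ to extend the bound to all $y\in\R$, which is exactly your periodicity-plus-compactness argument phrased slightly differently.
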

\begin{proof} There exists a constant $M$ such that $|H(x,y)-y|\leq M$ for every $(x,y)\in L\times [0,1]$; but $H(x,y+1)=H(x,y)+1$ implies that the same constant $M$ bounds $|H(x,y)-y|$ for $x\in L$ and $y\in \R$.
\end{proof}
When $H(x,y+1)=H(x,y)- 1$ the conclusion is changed to $|H(x,y)+y|\leq M$.
\subsection{Bounded preimages.}

We note $\gamma _1 \wedge \gamma _ 2$ the algebraic intersection number between two arcs in $A$ whenever it is defined.  In particular, when both arcs are loops, when one of the arcs is proper and the other is a loop or when both
arcs are defined on compact intervals but the endpoints of any of the arcs does not belong to the other arc. For convention, we set $c \wedge \gamma = 1 $ if $c: (0,1) \to A$ and $\gamma : [0,1] \to A$ verify:

$$c (t) = (t,1), \ \gamma (t) = (1/2, e^{2\pi i t}).$$

Note that the arc $c$ is a connector whose intersection number with any loop in $A$ gives the homology class of the loop.

If $\alpha$ is a loop in $A$, denote by $j\alpha$ the concatenation of $\alpha$ with itself $j$ times.

\begin{prop}
\label{condition}
Let $f$ be a covering of the open annulus $A$ and assume that $f$ is semiconjugate to $m_d$. Then the following condition holds:\\
(*) For each compact set $K\subset A$ there exists a number $C_K$ such that: given $\alpha\subset A$ a simple closed curve, $n\geq 1$ and $j\in [1, \ldots, d^{n-1}]$ then any $f^n-$ lift $\beta$ of $j \alpha$ with endpoints in $K$ satisfies $|\beta \wedge c|\leq C_K$ whenever it is defined.
\end{prop}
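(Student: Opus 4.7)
The plan is to lift $\beta$ to the universal cover $\tilde A = (0,1) \times \R$ and translate the intersection bound into a bound on the height difference of the endpoints of the lift, which by Proposition \ref{liftsemi} is controlled by a lift of the semiconjugacy $h$.

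The decisive preliminary observation is that any simple closed curve $\alpha$ in the open annulus has winding number $k_\alpha := \alpha \wedge c$ satisfying $|k_\alpha| \leq 1$: either $\alpha$ is null-homotopic ($k_\alpha = 0$) or it is essential and hence isotopic to a core circle ($k_\alpha = \pm 1$). Without this input the scheme fails, since the key estimate below scales linearly in $|k_\alpha|$.

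Next I would fix a lift $F$ of $f$ and a lift $H$ of $h$ satisfying $HF = dH$, assuming $H(x,y+1)=H(x,y)+1$ (the reversing case is symmetric via the remark following Proposition \ref{liftsemi}). Setting $L$ to be the projection of $K$ to the radial coordinate, Proposition \ref{liftsemi} furnishes a constant $M = M(L)$ with $|H(x,y)-y|\leq M$ on $L\times \R$. I would then lift $\beta$ to $\tilde\beta:[0,1]\to\tilde A$ with endpoints $(x_i,y_i)\in L\times\R$. Because $f^n\beta=j\alpha$, the composition $F^n\tilde\beta$ is a lift of $j\alpha$, so its endpoints differ by $(0,jk_\alpha)$. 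Combining the $\Z$-equivariance of $H$ with the intertwining $HF^n = d^n H$ gives
\begin{equation*}
d^n\bigl(H(\tilde\beta(1))-H(\tilde\beta(0))\bigr) = jk_\alpha,
\end{equation*}
and using $j \leq d^{n-1}$ together with $|k_\alpha|\leq 1$ one obtains $|H(\tilde\beta(1))-H(\tilde\beta(0))|\leq 1/|d|$. Two applications of the estimate $|H(x,y)-y|\leq M$ then give $|y_1 - y_0|\leq 1/|d| + 2M$.

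To finish, I would translate the height bound into an intersection bound: the lifts of $c$ to $\tilde A$ are the horizontal segments $(0,1)\times\{k\}$, $k\in\Z$, and since $\beta\wedge c$ is defined we have $y_0,y_1\notin\Z$; a direct count gives $|\beta\wedge c|=|\lfloor y_1\rfloor-\lfloor y_0\rfloor|\leq |y_1-y_0|+1$, so $C_K:=1/|d|+2M+1$ works. The main obstacle is conceptual rather than analytic: one has to spot that $|k_\alpha|\leq 1$ so that the factor $jk_\alpha/d^n$ remains small despite $j$ being potentially as large as $d^{n-1}$. The rest is careful bookkeeping between lifted heights, values of $H$, and the topological intersection count.
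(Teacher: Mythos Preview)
Your proof is correct and follows essentially the same route as the paper: lift $\beta$ to the universal cover, use $HF^n=d^nH$ together with Proposition~\ref{liftsemi} to bound the height difference of the endpoints of $\tilde\beta$, then convert this into a bound on $\beta\wedge c$. Your version is in fact slightly more careful than the paper's (you explicitly record $|k_\alpha|\leq 1$, you exploit $j\leq d^{n-1}$ to get $1/|d|$ instead of $1$, and you add the extra $+1$ when passing from $|y_1-y_0|$ to $|\lfloor y_1\rfloor-\lfloor y_0\rfloor|$), but these are refinements of the same argument.
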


\begin{proof}
Let $h$ be the semiconjugacy between $f$ and $m_d$ and let $H$ and $F$ be lifts of $h$ and $f$ verifying $HF=dH$ Assume that
$H(x,y+1)=H(x,y)+1$. Take $a,b\in (0,1)$  such that the set $\tilde K=[a,b]\times \R$ contains $\pi ^{-1} (K)$.
By Proposition \ref{liftsemi} above, there exists a constant $M$ such that $|H(x,y)-y|\leq M$ whenever $(x,y)\in \tilde K$.

Take $\alpha$, $n$, $j$ and $\beta$ as in the statement. Let $\tilde\beta$ be a lift of
$\beta$ to the universal covering. As the endpoints of $\beta$ belong to $K$, then the extreme points $(x_1,y_1)$ and $(x_2,y_2)$ of $\tilde \beta$ belong to $\tilde K$.
Note that it is enough to show that $|y_2-y_1|$ is bounded by a constant $C_K$. We will prove that this holds with $C_K=2M+1$.

Note that $F^n(x_1,y_1)$ and $F^n(x_2,y_2)$ are the endpoints of a lift of $j\alpha$ to the universal covering. This means that $|F^n(x_1,y_1)-F^n(x_2,y_2)|=(0,j)$. It follows
that $|H (F^n(x_1,y_1))-H(F^n(x_2,y_2))|=j$.

Then,
$$
|d^nH(x_1,y_1)-d^nH(x_2,y_2)|=|H(F^n(x_1,y_1))-H(F^n(x_2,y_2))|=j\leq d^n,
$$
so $|H(x_1,y_1)-H(x_2,y_2)|\leq 1$.
Finally, using that the endpoints of $\beta$ belong to $K$, it follows that
$$
|y_1-y_2|\leq |y_1-H(x_1,y_1)|+|H(x_1,y_1)-H(x_2,y_2)|+|H(x_2,y_2)-y_2|\leq 2M+1.
$$

We assumed at the beginning that $H(x,y+1)=H(x,y)+1$, for the other possibility the proof is similar.

\end{proof}

\subsection{Counterexample.}\label{contra}
Now we construct $f$, a covering of the open annulus for which the condition (*) introduced in Proposition \ref{condition} does not hold.
This implies that $f$ is not semiconjugate to $m_d$.

Let $\{a_n :\ n\in \Z\}$ be an increasing sequence of positive real numbers such that $a_n\to 0$ when $n\to-\infty$ and $a_n\to 1$ when $n\to+\infty$.
Define the annuli $A_n$ as the product $[a_n,a_{n+1}]\times S^1$, for each $n\in \Z$.
Let also $\lambda_n$ be the affine increasing homeomorphism carrying $[0,1]$ onto $[a_n,a_{n+1}]$.
Define $f(x,z)=(\lambda_{n+1}(\lambda_n^{-1}(x)),z^2)$ for $x\in [a_n,a_{n+1}]$, for every $n\leq -1$, that is, $(x,z)\in \cup_{n<0}A_n$.

Assume $f$ constructed until the annulus $A_{n-2}$ for some $n\geq 1$ and we will show how to construct the restriction of $f$ to $A_{n-1}$.
We will suppose that $f(a_k,z)=(a_{k+1},z^2)$ for every $k\leq n-1$ and every $z\in S^1$, $f(A_{k-1})= A_k$.

Let $\alpha$ be a curve in $A_0$ such that
\begin{enumerate}
\item
$\alpha$ joins $(a_0,1)$ with $(a_1,1)$.
\item
The lift $\alpha_0$ of $\alpha$ to the universal covering that begins at $(a_0,0)$, ends at $(a_1,n)$.
\item
$\beta:=f^{n-1}(\alpha)$ is simple.
\end{enumerate}

Note that $f^{n-1}$ is already defined in $A_0$. To prove that such an $\alpha$ exists, take first any $\alpha'$ satisfying the first and second conditions. Then $f^{n-1}(\alpha')$ is a curve joining $(a_{n-1},1)$ with $(a_n,1)$. Maybe $f^{n-1}(\alpha')$ is not simple, but there exists a simple curve $\beta$ homotopic to $f^{n-1}(\alpha')$ and with the same extreme points. Then define $\alpha$ as the lift of $\beta$ under $f^{n-1}$ that begins at the point $(a_0,1)$.

Choose any simple arc $\beta'$ disjoint from $\beta$ and contained in $A_{n-1}$, joining the points $(a_{n-1},-1)$ and $(a_n,-1)$.
Note that $f^{-(n-1)}(\beta')$ is the union of $2^{n-1}$ curves, all of them disjoint from $\alpha$. Choose any one of these curves and denote it by
$\alpha'$. Note that it does not intersect $\alpha$.
Observe that there is a lift $\alpha'_0$ of $\alpha'$ that begins in a point $(a_0,t)$ and ends at $(a_1,n+t)$ in the universal covering,$t>0$. Then choose a point $Y\in\alpha'$ whose lift $Y'$ in $\alpha'_0$ has second coordinate greater than $n$. Also choose a point $X$ in $\alpha$ whose lift $X'$ in $\alpha_0$ has second coordinate less that $1/2$.

Observe that $f^{n-1}(X)\in\beta$ and $f^{n-1}(Y)\in\beta'$.

The complement of $\beta\cup\beta'$ in the interior of $A_{n-1}$ consists of two open discs, and each one of them is homeomorphic to the complement of $s$ in the interior of $A_n$, where $s$ is the segment $\{(x,1)\ :\ a_n<x<a_{n+1}\}.$
Then it is possible to take a homeomorphism from each of these components and extend it to the boundary in such a way that the image of $\beta$ is $s$ and the image of $\beta'$ is also $s$, and carrying $f^{N-1}(X)$ and $f^{N-1}(Y)$ to the same point $p\in s$. If the homeomorphisms are taken carefully, they induce a covering $f$ from $A_{n-1}$ to $A_n$.
Now take a simple essential closed curve $\gamma$ contained in $A_n$ and with base point $p$. Note that for some $j\in [1, \ldots, 2^{n-1}]$, the curve $j\gamma$ lifts under $f^n$ to a curve joining $X$ to $Y$. But the difference between the second coordinates of $Y'$ and $X'$ is greater than $n-1$.
By the remark preceding Proposition \ref{condition}, it follows that the intersection number of a lift of $j\gamma$ and the connector $c$ in $A_0$ exceeds $n-1$. Taking $K=A_0$ in Proposition \ref{condition}, note that $C_K\geq n-1$, and as this can be done for every positive $n$, it follows that $f$ does not satisfy condition (*).

\subsection{Inverse limit}

Here we will define the inverse limit of a covering $f$ and prove that if $f$ has a fundamental domain, then its inverse limit is semiconjugate to the inverse limit of $m_d$. This shows that for the example given above, even that $f$ is not semiconjugate to $m_d$, the semiconjugacy can be defined on inverse limits.

\begin{defi}
\label{il}
The inverse limit set of a self map $f$ of a topological space $X$ (denoted $X_f$) is defined as the set of orbits of points in $X$, endowed with the product topology inherited from the countable product of $X$. If $\sigma_f$ denotes the shift map on $X_f$, then $\sigma_f$ is a homeomorphism onto $X_f$ and if $n\in\Z$, then $\pi_n\sigma_f=f\pi_n$, where $\pi_n:X_f\to X$ denotes the projection onto the $n^{th}$ coordinate.
The inverse limit set of the map $m_d$ in $S^1$ is denoted by $S_d$; the map $\sigma_d:=\sigma_{m_d}$ is commonly known as a solenoid.
\end{defi}

\begin{defi}
\label{fd}
Let $f$ be a covering of the annulus $A$. A fundamental domain for $f$ is a compact essential annulus $A_0$ such that the
following conditions hold:
\begin{enumerate}

\item
every orbit of $f$ hits at least once and at most twice in $A_0$, and
\item
$f(A_0)$ does not intersect the interior of $A_0$.
\end{enumerate}
\end{defi}

The fact that $f$ is a local homeomorphism implies that the preimage of an essential annulus is an essential annulus (note that the preimage of a nontrivial simple closed curve is a simple closed curve, and the preimage of a trivial closed curve is the union of $|d|$ disjoint simple closed curves). Then, for every $n>0$, $A_{-n}=f^{-n}(A_0)$ is an annulus, whose interior is disjoint to $A_0$.

Note that the nonwandering set of $f$ in $A$ is empty. To prove this, assume that $x_0$ is a nonwandering point. Then there exists an $f$-orbit of $x_0$, denoted $\{x_n\}_{n\in\Z}$, contained in $\Omega(f)$. It follows there exists at least one $k\in\Z$ such that $x_k\in A_0$.
But a point in $A_0$ cannot be nonwandering.

Consider $A^*$ as the compactification of $A$ with two points $N$ and $S$.
Note that $f$ extends to $A^*$ fixing the boundary points (otherwise it would have nonwandering points in $A$).

For a simple nontrivial closed curve $L\subset A$ define $S_L$
(resp. $N_L$) as the component of $A^*\setminus L$ that contains $S$ (resp. $N$).

If $L$ denotes a nontrivial simple closed curve in the interior of $A_0$ then $f^{-1}(L)$ is a nontrivial simple closed curve not intersecting
$L$. It follows that $f^{-1}(L)$ is a subset either of $S_{L}$ or of $N_L$. Assume that
$f^{-1}(L)\subset S_L$. It follows that $f^{-n}(L)\subset S_{f^{-n+1}(L)}$ for every $n>0$. Then $\cup_{n\geq 0} A_{-n}$ is equal to $S_{L_0}$, where $L_0$ denotes the northern boundary of $A_0$.
Obviously $L_{-1}=f^{-1}(L_0)$ is the southern boundary of $A_0$.

The preimage of a fundamental domain is also a fundamental domain, but not the image. For instance, take $p_d(z)=z^d$ in the punctured unit disc, $\D\setminus\{0\}$, let $\gamma$ be a simple closed curve (close to a circle) centered at the origin but that is not symmetric with respect to the origin, this means there exists a point $x\in\gamma$ such that $-x\notin\gamma$. Then $p_d^{-1}(\gamma)\cap\gamma=\emptyset$, and $p_d(\gamma)$ is not a simple curve. It follows that the set of points between $\gamma$ and $p_d^{-1}(\gamma)$ is a fundamental domain but its image is not.

\begin{prop}
\label{inverse}
If a covering map $f$ of the annulus has a fundamental domain, then $\sigma_f$ is semiconjugate to $\sigma_d$, with $d$ the degree of $f$.
\end{prop}
Here, a semiconjugacy is a continuous surjective map $h:A_f\to S_d$ such that $h\sigma_f=\sigma_d h$.
\begin{proof}
Let $A_0$ be a fundamental domain for $f$. Then $\sigma_f$ has a fundamental domain $\bar{A_0}$ defined as the set of points $\bar z$ such that $z_0\in A_0$. However, the map $\sigma_d$ does not have a fundamental domain; note, for instance, that its nonwandering set is the whole $S_d$.
Then the image under a semiconjugacy of the fundamental domain of $\sigma_f$ must be the whole $S_d$. It suffices to construct the semiconjugacy $h$ in $\bar{A_0}$. One has to define functions $h_n:\bar{A_0}\to S^1$ to determine $h=\{h_n\}$.
It begins with the choice of a surjective map from $A_0$ to $S^1$, the function $h_0$ will depend just on the $0$-coordinate of a point
$\bar z\in\bar{A_0}$. It is obvious how to define $h_n$ for $n$ positive, but for negative $n$, one has to determine regions where $f^n$ is injective.

We proceed to do this.  Let $L_0$ and $L_{-1}$ be the boundary components of $A_0$, where $f^{-1}(L_0) = L_{-1}$.
Take any simple arc $\gamma_0$ contained in the closure of $A_0$ joining a point $x\in L_{-1}$ with the point $f(x)\in L_0$. The curve $\gamma_0$ is a connector of $A_0$. Then $f^{-1}(\gamma_0)$ is equal to the union of $d$ simple arcs, each one of which is a connector of $A_{-1}=f^{-1}(A_0)$. We assume that these arcs are enumerated as $\gamma^1_0,\ldots,\gamma^1_{d-1}$, in such a way that $\gamma^1_0$ has an extreme point in $x$, and the extreme points of $\gamma^1_j$ in $L_{-1}$ are counterclockwise ordered. By induction, we can define, for each positive $n$, a sequence $\{\gamma^n_j\ : \ 0\leq j\leq d^n-1\}$ of connectors of $A_{-n}$, such that
$\gamma^n_0$ has an extreme point in common with $\gamma^{n-1}_0$.
Moreover, if the extreme point of $\gamma^n_j$ in $f^{-n}(L_0)$ is denoted by $x_j^n$, then these points are counterclockwise oriented in the curve $f^{-n}(L_0)$. Note that the restriction of $f^n$ to the open region $D^n_j$ contained in $f^{-n}(A_0)$ and bounded by $\gamma_j^n$ and $\gamma_{j+1}^n$ is injective, for $0\leq j\leq d^n-1$.
By convenience, denote $\gamma_{d^n}^n:=\gamma_0^n$.

First define a function $\phi:A_0\to S^1$ satisfying some conditions. The function $\phi$ will be used to define $h_0$; indeed, $h_0(\bar z)$ will depend only on the value of $z_0$ and $\phi (z_0)$ will be equal to $h_0 (\bar z)$ if $z_0\in \overline A_0$. The conditions imposed on $\phi$ are: 1. $\phi$ is continuous, 2. $\phi^{-1}(1)= \gamma _0$.
3. $\phi$ carries the circle $L_{-1}$ homeomorphically onto $S^1$ in such a way that $f(y)=f(y')$ if and only if $m_d(\phi(y))=m_d(\phi(y'))$.
The last assertion allows to define $\phi(f(y))=m_d(\phi(y))$ whenever $y\in L_{-1}$.

To define $h$, begin with a point $\bar z=\{z_n\}_{n\in\Z}$ contained in $A_f$, and assume that $z_0\in A_0\setminus L_{-1}$. Assume also that $z_0\notin \gamma_0$. Then, for each positive $n$, the point $z_{-n}$ belongs to $D^n_j$ for some (unique) $j$, $0\leq j\leq d^n-1$.  Then let $h_{-n}(\bar z)$ be the unique $m_d^n$-preimage of the point $\phi(z_0)$ that belongs to the arc in $S^1$ with extreme points
$\exp(2\pi i j/d^n)$ and
$\exp(2\pi i (j+1)/d^n)$.

Now, if $z_0$ belongs to $\gamma_0$, then $z_{-n}$ belongs to some $\gamma^n_j$. Then define $h_{-n}(\bar z)=\exp(2\pi i j/d^n)$. For positive $n$, define $h_n(\bar z)=m_d^n(\phi(z_0))$.

The equation $\pi_n(h(\bar z))=h_n(\bar z)$ defines a map from $\pi^{-1}(A_0)\cap A_f$ to $S_d$. By construction, each $h_n$ is continuous, from which it follows that $h$ is continuous. Note also that the image of this map is all $S_d$.

This concludes the definition of the restriction of $h$ to the fundamental domain of $\sigma_f$.

To define $h$ in the whole $A_f$, take any $\bar z\in A_f$, Then there exists a unique $k\in\Z$ such that
$z_k\in A_0\setminus L_{-1}$. Define $h(\bar z)=\sigma_d^k(h(\sigma_f^{-k}(\bar z))$.

\end{proof}

\section{Basins.}
\label{basins}
Let $f$ be an endomorphism of a surface $M$ having an attracting set $\Lambda$ which is normal (see definition in the introduction) and has degree $d>1$; assuming that $f$ has no critical points in the basin $B_0(\Lambda )$ of $\Lambda$, it comes that the restriction of $f$ to the immediate basin of $\Lambda$ is a covering of the same degree. Moreover, if $C$ is a component of $B_0(\Lambda )\setminus \Lambda$, then $C$ is an annulus and if $C$ is invariant, then $f$ is a covering of $C$ (see \cite{iprx}). We will prove here that under these conditions, $f$ in $C$ is semiconjugate to $m_d$. We note that this is not an immediate consequence of Corollary \ref{cerrado} because the closure of $C$ is not necessarily a closed annulus.

The following example is illustrating on the situation. The map $f(z)=z^2-1$ is a hyperbolic map of the two-sphere having a superattractor at $\infty$. The restriction of $f$ to the basin of $\infty$ is a degree two covering map of the annulus $\C\setminus \hat J$ conjugate to $z\to z^2$ restricted to the exterior of the unit circle, where $J$ is the Julia set of $f$ and $\hat J$ is the filled Julia set. The restriction of $f$ to the Julia set $J$ (that is the boundary
of the basin of $\infty$), is also a covering of degree two, but this map is not semiconjugate to $m_2$. The Julia set $J$ is a curve, but is
not a simple curve. Moreover, there exists a circle contained in $J$, that is tangent to its preimage, that is also a circle: this prevents the existence of a semiconjugacy to $m_2$ on the circle.

The results mentioned above, proved in \cite{iprx}, imply that the Julia set of $f$ cannot be the attracting set of a continuous map of the sphere. Indeed, if $\Lambda$ is a connected attracting set, then there exists a basis $\mathcal B$ of neighborhoods of $\Lambda$, each homeomorphic to an annulus. Moreover each $U\in \mathcal B$ satisfies that the closure of $f(U)$ is contained in $U$.

\begin{thm}
\label{inU}
If $\Lambda$ is a normal attractor and $f$ has no critical points in the closure of the basin of $\Lambda$, then:\\
(a) the restriction of $f$ to $\Lambda$ is semiconjugate to $m_d$.\\
(b) the restriction of $f$ to $B_0(\Lambda)$ is also semiconjugate to $m_d$.
\end{thm}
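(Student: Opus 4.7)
The strategy is to reduce each part to tools developed earlier in the paper: Part (a) to the shadowing remark at the end of Section \ref{shd}, and Part (b) to Corollary \ref{rep2} via the construction of an invariant trivial connector of $C$.

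For Part (a), the normality of $\Lambda$ furnishes an essential open annular neighborhood $V$ of $\Lambda$ in $B_0(\Lambda)$ with $\overline{f(V)}\subset V$, and the no-critical-points hypothesis makes $f|_V\colon V\to f(V)\subset V$ a local homeomorphism of degree $d$. Since $\Lambda=\bigcap_{n\geq 0}f^n(V)$, the remark at the end of Section \ref{shd} (itself an immediate consequence of Lemmas \ref{semi2} and \ref{sur}) yields a semiconjugacy $h_\Lambda\colon\Lambda\to S^1$ satisfying $h_\Lambda\circ f|_\Lambda=m_d\circ h_\Lambda$.

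For Part (b), let $V_C$ denote the connected component of $V\cap C$ accumulating on $\Lambda$; after shrinking $V$ if necessary, $V_C$ is an essential open subannulus of $C$ on which $f\colon V_C\to f(V_C)$ is a degree-$d$ covering and $f(\overline{V_C})\subset V_C$. Write $V_n:=f^n(V_C)$ for $n\geq 0$ and $V_n:=f^{-|n|}(V_C)$ for $n<0$. A direct check gives $f^{-1}(V_n)=V_{n-1}$ and $f(V_n\setminus V_{n+1})=V_{n+1}\setminus V_{n+2}$ for every $n\in\Z$. Consequently the containments $V_0\supsetneq V_1\supsetneq\cdots$ (shrinking toward $\Lambda$ in $\overline C$) and $V_0\subsetneq V_{-1}\subsetneq\cdots$ (exhausting $C$, since $C\subset B_0(\Lambda)$) are all strict, and the shells $V_n\setminus V_{n+1}$, $n\in\Z$, are pairwise disjoint and cover $C$.

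Now choose $p\in\partial V_C\cap C$ and a simple arc $\gamma_0$ from $p$ to $f(p)$ in the fundamental shell $V_0\setminus V_1$. Set $\gamma_n:=f^n(\gamma_0)$ for $n\geq 1$, and for $n\leq -1$ let $\gamma_n$ be the unique $f$-lift of $\gamma_{n+1}$ whose right endpoint is the left endpoint of $\gamma_{n+1}$ (existence and uniqueness by path-lifting for the covering $f|_C$). Induction using the shell identities above places $\gamma_n$ inside $V_n\setminus V_{n+1}$ for every $n\in\Z$. Pairwise disjointness of the shells makes the arcs $\gamma_n$ meet only at the shared endpoints $f^{n+1}(p)$, so $C_\gamma:=\bigcup_{n\in\Z}\gamma_n$ is a simple arc in $C$. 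Its forward iterates accumulate on $\Lambda$, and its backward iterates leave every compact subset of $C$ (any accumulation point $q\in C$ would satisfy $f^k(q)\in V_C$ for some $k$, contradicting $\gamma_{-n+k}\subset C\setminus V_C$ for $n$ large). Hence $C_\gamma$ is a connector of $C$, trivial because it is a simple arc joining the two ends of the open annulus, and invariant since $f(\gamma_n)=\gamma_{n+1}$. Corollary \ref{rep2} then delivers the desired semiconjugacy.

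The main technical obstacle is the shell-by-shell placement of each $\gamma_n$, which is what keeps $C_\gamma$ simple and therefore trivial. This bookkeeping hinges on the identity $f^{-1}(V_n)=V_{n-1}$, which requires $V_C$ to be small enough for $f|_{V_C}\colon V_C\to f(V_C)$ to inherit the full covering degree $d$ of $f|_\Lambda$; once this holds, the backward iterates are driven outward through the exhaustion $C=\bigcup_{n\geq 0}V_{-n}$ toward the boundary of $C$ opposite $\Lambda$.
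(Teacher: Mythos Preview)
Your Part (a) matches the paper's: both cite the application at the end of Section~\ref{shd}.

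For Part (b) the paper takes a different route. It does not build a connector; instead it applies Lemma~\ref{semi2} with $K=\bar U$ (an annular neighbourhood of $\Lambda$ with $\overline{f(U)}\subset U$) to get a continuous $h:\bar U\to S^1$ with $hf=m_dh$, lifts to $H:\pi^{-1}(\bar U)\to\R$ on the universal cover $\tilde A$ of $B_0(\Lambda)$, and then uses that the lift $F$ is a homeomorphism of $\tilde A$ and that every $F$-orbit eventually enters $\pi^{-1}(\bar U)$ to extend $H$ uniquely to all of $\tilde A$ via $H'=d^{-n}H\circ F^n$. The quotient of $H'$ is the semiconjugacy. This is a short analytic extension that avoids connectors entirely.

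Your connector argument has a real gap for the statement as written. Since the immediate basin $B_0(\Lambda)$ is connected, the only invariant component $C$ of $B_0(\Lambda)$ is $B_0(\Lambda)$ itself, and $\Lambda$ sits in its \emph{interior}, not at an end. Your arc $C_\gamma$ has forward iterates accumulating on $\Lambda$ and backward iterates escaping toward at most one boundary component of $B_0(\Lambda)$; hence $C_\gamma$ does not accumulate on both ends of $C$ and is not a connector of $C$. Your construction does work, and gives a pleasant geometric alternative in the spirit of the Example following Corollary~\ref{rep2}, when $C$ is taken to be an invariant component of $B_0(\Lambda)\setminus\Lambda$ (the situation emphasised in the introduction), because then $\Lambda$ really is one end of the open annulus $C$. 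To treat $C=B_0(\Lambda)$ by your method you would have to splice two such half-arcs across $\Lambda$, which is delicate since $\Lambda$ need not be an arc or even locally connected; the paper's shadowing-and-extend argument sidesteps this completely. A minor additional point: your identity $f(V_n\setminus V_{n+1})=V_{n+1}\setminus V_{n+2}$ is not automatic for a $d$-to-$1$ map, so the forward shell placement of the $\gamma_n$ also needs a closer look.
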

\begin{proof}
The first item was proved as an application at the end of subsection 2.2.
To prove the second one, let $U$ be an annular neighborhood of $\Lambda$ such that the closure of $f(U)$ is contained in $U$.
It is known that the union for $n>0$ of the sets $f^{-n}(U)\cap B_0(\Lambda)$ equals the immediate basin $B_0(\Lambda)$ and is
an open annulus, restricted to which $f$ is a covering of degree $d$, with $|d|>1$ (see \cite{iprx} Thm. 2.). By Lemma \ref{semi2} (taking $K=\bar U$) there exists a continuous
$h:\bar U\to S^1$ such that $hf=m_dh$.  If $(\tilde A,\pi)$ denotes the universal covering of $B_0(\Lambda)$, then there exist a lift
$H:\pi^{-1}(\bar U)\to \R$ and a lift $F$ of $f$ such that $HF|_{\pi^{-1}(\bar U)}=dH$ and $H(x,y+1)=H(x,y)+1$. Note that $F$ is a homeomorphism of $\tilde A$ and that $\pi^{-1}(\bar U)$ is $F$-invariant, so there exists a unique continuous extension $H'$ of $H$
to $\tilde A$ that satisfies $H'F=dH'$.
Therefore the quotient map of $H$ is a semiconjugacy $h$ between $f$ and $m_d$. Finally, restrict $h$ to the component $C$.

\end{proof}

Two homeomorphisms may have homeomorphic basins without being conjugate. However, when restricted to the trivial dynamics in
$B\setminus\Lambda$ (where $\Lambda$ is the attractor and $B$ the basin), the fact that two fundamental domains are homeomorphic implies that the maps are conjugate.
This is not true for coverings in general.
We will consider the map $p_d(z)=z^d$ as acting in $\D^*=\D\setminus\{0\}$ (we use the notation $m_d$ for $z\to z^d$ acting on $S^1$).\\

\noindent
{\bf Example 1.} There exists a covering $f:[0,1]\times S^1\to [0,1]\times S^1$, admitting a fundamental domain, but whose restriction to
$(0,1)\times S^1$ is not conjugate to $p_d$.\\
Given any degree $d$ covering $g$ of $S^1$ that is not conjugate to $m_d$, the map $f:[0,1]\times S^1\to [0,1]\times S^1$ given by $f(x,z)=(x^2, g(z))$ is not conjugate to $p_d$.
As $g$ is not conjugate to $m_d$, there exists a periodic or wandering arc $(a,b)\subset S^1$.
Let $\Delta =\{(x,z)\in  (0,1)\times S^1: \ z\in(a,b)\}$ and $B$ a closed disc contained in $\Delta$. Assume there exists a conjugacy $H$ between $f$ and $p_{d}$. Note that $f^n(B)$ is a disc for every $n>0$, because $f^n$ is injective in $\Delta$. On the other hand, $H(B)$ is a disc and so $p_d^n$ is not injective in $H(B)$ for every large $n$.
This is a contradiction.\\

\noindent
{\bf Example 2.} There exists a map $f:[0,1]\times S^1\to [0,1]\times S^1$ with fundamental domain, but it is not semiconjugate to $p_d$.\\
Note that $f$ is defined in the closed annulus which implies that it is semiconjugate to $m_d$.\\
Let $f(x,z)=(\phi(x,z),z^d)$, where $\phi$ will be determined. The condition to be imposed on $f$ is the following: there exists a point $P$ such that the union for $n>0$ of the sets $f^{-n}(f^n(P))$ is dense in an essential annulus $A_0\subset A$.
Assume that there exists a semiconjugacy $h$ between $f$ and $p_d$. This means that $hf=p_dh$, that $h$ is surjective and is an isomorphism on first homotopy group.
It follows that $h(A_0)$ must be a nontrivial circle. If, in addition, the annulus $A_0$ is a fundamental domain for $f$, then the range
of $h$ will be a countable union of circles, hence $h$ is not surjective, a contradiction.

Let $\{a_n :\ n\in \Z\}$ be an increasing sequence of positive real numbers such that $a_n\to 0$ when $n\to-\infty$ and $a_n\to 1$ when $n\to+\infty$.
Define the annuli $A_n$ as the product $[a_n,a_{n+1}]\times S^1$, for each $n\in \Z$.
Let also $\lambda_n$ be the linear increasing homeomorphism carrying $[0,1]$ onto $[a_n,a_{n+1}]$.

The construction of $\phi$ depends on two sequences. First let $\bar z=\{z_n:n<0\}$ be a preorbit of $1$ under $m_d$, that is, $m_d(z_n)=z_{n+1}$ for $n<-1$, and $m_d(z_{-1})=1$, and assume also that $z_{-1}\neq 1$, which implies that the $z_n$ are all different. Then take an element $\bar\nu=\{\nu_n\}_{n<0}\in (0,1)^\N$.

By appropriately choosing the function $\phi$, it will come that the map $f$ will be such that, for some $P\in A_0$,
the set $f^{-n}(f^n(P))$ contains the point $(\lambda_0(\nu_{-n}),z_{-n})$ for every $n>0$.

It is clear that the sequences $\bar\nu$ and $\bar z$ can be chosen in order to make the set $\cup_{n>0}f^{-n}(f^n(P))$ dense in $A_0$, which is a fundamental domain for $f$ in $A$.

Fix a point $P=(\lambda_0(1/2),1)$ in the annulus $A_0$. To define $\phi$ we will use the sequences $\bar z$ and $\bar\nu$. The definition of $\phi$ is by induction beginning in the annulus $A_0$. Note that $\phi$ must carry the annulus $A_{n}$ into the segment $[a_{n+1},a_{n+2}]$.
For each $z\in S^1$, note that $\phi_z(x):=\phi(x,z)$ is a homeomorphism from the segment $[a_n,a_{n+1}]$ onto the segment $[a_{n+1},a_{n+2}]$.

We will first define $\phi_1$ in its whole domain, and then, by induction on $k$, the restriction of $\phi$ to $A_k$.

First define $\phi_1(x)$: for $x\in [a_n,a_{n+1}]$ let $\phi_1(x)=\lambda_{n+1}(\lambda^{-1}_{n}(x))$, that is, $\phi_1$ is affine, the image of $P$ under $f^k$ is equal to $(\lambda_k(1/2),1)$.

Also define $\phi_{z_{-1}}(\lambda_0(\nu_{-1}))=\lambda_1^{}(1/2)$, this is the only condition asked for this map. It is obvious that $\phi$ can be extended to the annulus $A_0$ so as to satisfy this unique condition (it is used, of course, that $z_{-1}\neq 1$).

Next let $k>0$, and define $\phi$ in $A_{k}$ assuming it is already known in $A_{k-1}$. Of course, $f$ is also defined in $A_0\cup\cdots\cup A_{k-1}$ and one can iterate $f^{k}$ at points in $A_0$, in particular the image of the point $(\lambda_0(\nu_{-k-1}),z_{-k-1})\in A_0$ under $f^{k}$ is a point having second coordinate $z_{-1}$, denote it by $(x_{k},z_{-1})$ in $A_k$.
Next extend $\phi$ to $A_{k+1}$ in order to satisfy only one condition: $\phi_{z_{-1}}(x_{k})=\lambda_{k+1}(1/2)$.

As was pointed out above, $f^n(P)=(\lambda_n(1/2),1)$ for every $n\geq 0$. It follows that
\begin{eqnarray*}
f^n(\lambda_0(\nu_{-n}),z_{-n})&=&f(f^{n-1}(\lambda_0(\nu_{-n}),z_{-n}))=f(x_{n-1},z_{-1})\\
& = & (\phi_{z_{-1}}(x_{n-1}),m_d(z_{-1}))=f^n(P),
\end{eqnarray*}
as required.

\qed

We finish this work with another negative result, negative in the direction of a possible classification of covering maps of the annulus.

Consider the Whitney (or strong) $C^0$ topology in the space of covering maps of the annulus, defined as follows: if $f\in Cov(A)$ and
$\epsilon: A\to \R^+$ is a continuous function, then the $\epsilon-$neighborhood of $f$ is
$$
\mathcal N_\epsilon(f)=\{g\in Cov(A)\ :\ d(g(x),f(x))<\epsilon(x)\ \forall x\in A\}
$$
where $d$ is any fixed distance compatible with the topology of $A$.
The notation for the space of $C^0$ maps endowed with this topology is $C^0_W(A)$.

\begin{defi}
\label{stable}
A map $f\in Cov(A)$ is $C_W^0(A)$-stable if there exists a $C^0_W$-neighborhood of $f$ such that every map $g$ in this neighborhood is conjugate to $f$.
\end{defi}

\begin{thm}
The map $p_d(z)=z^d$ is not $C^0_W(A)$-stable if $A$ is the punctured unit disc $A=D^*=\{z\in \C\ : \ 0<|z|<1\}$.
\end{thm}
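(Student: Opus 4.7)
The plan is to reduce the theorem to the failure of condition $(*)$ of Proposition \ref{condition}, and then to construct, in every Whitney neighborhood of $p_d$, a covering map that fails $(*)$ by mimicking the counterexample of Section \ref{contra} on a sequence of annuli accumulating at the puncture.

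First I would observe that $p_d$ is itself semiconjugate to $m_d$ via the radial projection $h(z) = z/|z|$: one has $h(p_d(z)) = z^d/|z|^d = (z/|z|)^d = m_d(h(z))$, and $h_*$ is an isomorphism on $H_1 \cong \Z$. Consequently, if $g$ were topologically conjugate to $p_d$ via a homeomorphism $\psi$, then $h\circ \psi^{-1}$ would be a semiconjugacy of $g$ with $m_d$. Therefore it suffices to produce, for any continuous $\epsilon:\D^*\to\R^+$, a covering $g\in\mathcal N_\epsilon(p_d)$ that is not semiconjugate to $m_d$; by Proposition \ref{condition}, this follows from violating condition $(*)$.

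Next, I would construct $g$ as follows. Identify $\D^*$ with $(0,1)\times S^1$ so that $p_d$ becomes $(r,y)\mapsto (r^d, dy\bmod 1)$. Fix $R_0\in(0,1)$, set $R_n=R_0^{d^n}$, and put $V_n=[R_{n+1},R_n]\times S^1$, the natural $p_d$-fundamental annuli. Choose $n_0$ large and set $g\equiv p_d$ outside $\bigcup_{n\geq n_0} V_n$. On each $V_n$ with $n\geq n_0$ I would import, in miniature, the gluing construction of Section \ref{contra}: pick disjoint connectors $\beta_n,\beta_n'$ of $V_n$ whose $p_d^n$-preimages contain points $X_n,Y_n\in V_0$ whose lifts to the universal cover have vertical coordinates differing by at least $n-1$, and then modify $p_d|_{V_n}$ by an isotopy supported near $\beta_n\cup\beta_n'$ so that $g(\beta_n)=g(\beta_n')$ is a single arc in $V_{n+1}$. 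Each such modification has pointwise size at most $\mathrm{diam}(V_{n+1})\leq R_{n+1}$, which decays doubly-exponentially in $n$; since $\inf_{V_n}\epsilon>0$ by continuity, taking $n_0$ sufficiently large ensures the Whitney estimate $|g-p_d|<\epsilon$ pointwise, so $g\in\mathcal N_\epsilon(p_d)$.

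The gluing produces points $X_n,Y_n$ in the compact set $K:=V_0$ with $g^n(X_n)=g^n(Y_n)$, whose universal-cover lifts differ in vertical coordinate by at least $n-1$. Choosing a simple closed curve $\alpha_n$ in $V_{n+1}$ based at $g^n(X_n)$, for some $j_n\in\{1,\ldots,d^{n-1}\}$ the $g^n$-lift of $j_n\alpha_n$ starting at $X_n$ ends at $Y_n$, so its intersection number with the reference connector $c$ exceeds $n-1$. Since $n$ is arbitrary, no finite $C_K$ can bound these intersections, and condition $(*)$ fails. By Proposition \ref{condition}, $g$ is not semiconjugate to $m_d$, hence not conjugate to $p_d$, which completes the argument.

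The main obstacle I anticipate is controlling the Whitney estimate when $\epsilon$ decays very rapidly near the puncture: for sufficiently fast decay of $\epsilon$, the inequality $R_{n+1}<\inf_{V_n}\epsilon$ can fail at the ``natural'' scale $R_n=R_0^{d^n}$. The remedy is to choose the scales $\{R_n\}$ adaptively to $\epsilon$, taking them to decrease fast enough that both the annuli $V_n$ and the modifications on them lie well within the tolerance. The price is that $V_n$ then ceases to be a $p_d$-fundamental domain, which forces minor technical adjustments near $\partial V_n$ to keep $g$ a global covering map of $\D^*$, but does not affect the topological construction of the winding defect responsible for failing $(*)$.
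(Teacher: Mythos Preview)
Your strategy has a fatal gap that the obstacle you flag at the end actually foreshadows but does not resolve. Take $\epsilon(z)=c\,|z|^d$ with $0<c<1$; this is a legitimate continuous positive function on $\D^*$, hence defines a Whitney neighborhood. For any $g\in\mathcal N_\epsilon(p_d)$ and any $z$, the Euclidean distance from $g(z)$ to $p_d(z)$ is less than $c|z|^d=c\,|p_d(z)|$, so the angular difference between $g(z)$ and $p_d(z)$ is bounded by $\arcsin c$. Passing to the universal cover, this says that a lift $G$ of $g$ satisfies $\sup|y_1-dy_0|<\infty$, and Corollary~\ref{acotado} then forces $g$ to be semiconjugate to $m_d$. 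Thus for this particular $\epsilon$ \emph{every} map in $\mathcal N_\epsilon(p_d)$ is semiconjugate to $m_d$, and your plan of exhibiting a $g$ that fails condition~$(*)$ is impossible there. The adaptive-scale remedy cannot help: the obstruction is not the size of the annuli $V_n$ but the fact that any angular modification of order one at radius $|z|^d$ costs Euclidean distance of order $|z|^d$, which already exceeds $\epsilon(z)$.

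The paper's argument avoids this entirely by using a different conjugacy invariant. It builds, inside an arbitrary $\mathcal N_\epsilon(p_d)$, a covering $g$ of the form $x e^{it}\mapsto x^2 e^{i\phi(t)}$ with $|\phi(t)-2t|$ uniformly small; this $g$ \emph{is} semiconjugate to $m_d$ (indeed Corollary~\ref{acotado} applies to it). The point is that $g$ possesses a forward-invariant open wedge $R$ on which $g$ is injective, whereas for $p_d$ no such set exists: any open set contains a circular arc, and $p_d^n$ eventually fails to be injective on it. Since ``admits a forward-invariant open set on which the map is injective'' is preserved under conjugacy, $g$ and $p_d$ are not conjugate. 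So the paper distinguishes $g$ from $p_d$ by injectivity on an invariant open set, not by the absence of a semiconjugacy to $m_d$; your reduction to Proposition~\ref{condition} points in a direction that is closed off for small enough Whitney neighborhoods.
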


Some remarks before the proof:\\
1. The distance $d$ is the Euclidean distance in $\D$.\\
2. If $g\in Cov(A)$ belongs to a neighborhood of $p_d$ such that $\epsilon(x)\to 0$ as $x\to\partial A$, then $g(z_n)\to p_d(z)$ whenever
$\{z_n\}$ is a sequence in $A$ converging to a point $z\in \partial A$. Thus $g$ extends continuously to the boundary, where it coincides
with $p_d$.\\
3. Note that $p_d$ is not $C^0$ stable, when one considers weak topology (define neigborhoods as above but with $\epsilon$ equal to a constant). This is obvious since one can create periodic points in $A$.\\
4. It is well known that the restriction of $p_d$ to its Julia set is $C^1$ stable. Moreover, the restriction of $p_d$ to $A=\C\setminus\{0\}$ is $C_W^1$ stable (see \cite{ipr}, Corollary 4).\\
5. Note that a homeomorphism having a hyperbolic attractor $\Lambda$ is $C_W^0$ stable when restricted to an invariant component of
$B(\Lambda)\setminus\Lambda$. Indeed, the definition of the conjugacy can be made in a fundamental domain and then extended to the future and the past. The same construction is not possible for noninvertible maps: note, for example, that the image of a fundamental domain is not necessarily a fundamental domain.\\
6. Note that the theorem also implies that if $A=\C\setminus\bar\D$, then $p_d$ is not $C_W^0(A)$ stable.\\

\begin{proof}
We will use $f=p_2$ and prove this case, the generalization to arbitrary degree being obvious.
Then the function to be perturbed is $f(x\exp(it))=x^2\exp(2it)$, where $x$ is positive and $t\in\R$. We will find a perturbation $g$ of $f$ having an invariant set with nonempty interior where $g$ is injective. This does not exist for $f$.

First the perturbation of $t\in\R\to 2t\in\R$.
Let $\rho$ and $\rho'$ be positive numbers such that $\rho'\leq \rho$. Then there exists an increasing continuous function $\phi=\phi_{\rho,\rho'}:[-\pi, \pi]\to\R$ satisfying $\phi(t+2\pi)= \phi (t) + 4 \pi$ for all $t\in \R$ such that $\phi((-\rho,\rho))= (-\rho',\rho')$, $\phi_0(0)=0$ and $|\phi(t)-2t|\leq 2\rho-\rho'$ for every $t$. Moreover, $\phi(t)=2t$ whenever $|t|>2\rho$. Moreover, one can ask the function $(t,\rho,\rho')\to\phi_{\rho,\rho'}(t)$ to be continuous.

Let $\epsilon$ be any positive continuous function defined in $\D^*$. Note that $\epsilon'\leq\epsilon$ implies that the $\epsilon'$-neighborhood of $f$ is contained in the $\epsilon$-neighborhood of $f$. So it can be assumed that the function $\epsilon$ satisfies $\epsilon(x\exp(it))=\epsilon(x)$.

It is claimed now that there exists a continuous function $\rho:(0,1)\to \R^+$ such that $2\rho(x)< \epsilon(x)$ for every $x\in(0,1)$ and $\rho(x^2)<\rho(x)$ for every $x\leq 1/2$. Indeed, first define $\rho(x)$ in the interval $[1/4,1/2]$ so that $\rho(x)<\epsilon(x)/2$ and $\rho(1/4)<\rho(1/2)$. Then define
$\rho$ for $x\in [1/16,1/4]$, so as to satisfy $\rho(x)<\rho(\sqrt{x})$ and $\rho(x)<\epsilon(x)/2$. Then use induction to define it in the remaining fundamental domains of the action of $x\to x^2$ in $(0,1/2]$. It is clear that $\rho$ can be continuously extended to the whole interval $(0,1)$ so as to satisfy $2\rho(x)<\epsilon(x)$.

Next proceed to the definition of $g$, a particular perturbation of $f$. Define
$$
g(x\exp(it))=x^2\exp(i\phi_{\rho(x),\rho(x^2)}(t)).
$$

Note first that $g$ is continuous, and defines a covering of the annulus, because the functions $\phi$ used in its definition are all increasing. Moreover, $z=x\exp(it)$ implies $f(z)=g(z)$ if $|t|>2\rho(x)$. Moreover:

\begin{eqnarray*}
|g(z)-f(z)| & = & x^2|\exp(i\phi_{\rho(x),\rho(x^2)}(t)-\exp(2it)| \leq x^2 |\phi_{\rho(x),\rho(x^2)}(t)-2t|\\
            & < & 2\rho(x)-\rho(x^2)<2\rho(x)<\epsilon(x)=\epsilon(z)
\end{eqnarray*}
Therefore $g$ belongs to the $\epsilon$-neighborhood of $f$. It remains to show that $g$ is not conjugate to $f$.
Note that the set $R:=\{x\exp(it)\ :\ x<1/2,\ |t|<\rho(x)\}$ is forward invariant under $g$, because $g(x\exp(it))=x^2\exp(i\phi(t))$
and $|\phi(t)|<\rho(x^2)$ if $|t|<\rho(x)$. But $g$ is injective on $R$, and $R$ has nonempty interior. Thus $f$ and $g$ cannot be conjugate.
\end{proof}

\section{Some final comments and questions.}

The problem of classifying coverings is very complicated, we cannot even imagine a classification of a neighborhood of the ''simplest''
map $p_d$.
One simple question, whose answer we still don't know is if every Whitney $C^0$ perturbation of $p_d$ has a covering by fundamental domains.
In the perturbation made above, the invariant foliation by circles centered at the origin is preserved.

It may be easy to see that no covering of $A$ of degree $|d|>1$ is $C^0_W(A)$ stable, but we won't give a proof of this here.

The question of the existence of periodic points for covering maps of the annulus will be considered in a following article. For example it will be proved that a covering of degree greater than one having an invariant continuum must have fixed points.

It is natural to consider the rotation number for coverings of the annulus as defined in section 2.1.
That is, given a lift $F:(0,1)\times \R\to (0,1)\times \R$ of $f$, take a point $(x_0,y_0)\in (0,1)\times\R$ and define
$$
\rho(x_0,y_0)=\lim_{n\to+\infty} \frac{y_n}{d^n},
$$
whenever this limit exists, and where $(x_n,y_n)=F^n(x_0,y_0)$. What conclusions can be drawn if, for example, this limit exists for every point? Does it necessarily define a continuous function?

\end{document}